\title{$L^{2}$-estimates for the linear elastic waves}
\author{
Hiroshi Takeda\thanks{h-takeda@fukuoka-u.ac.jp},\\
Department of Applied Mathematics, \\
Faculty of Science, Fukuoka University, \\
Nanakuma, Jonan-ku, Fukuoka, 814-0180, Japan
}
\date{}
\newcommand{\R}{\mathbb R}
\newcommand{\supp}{\mathop{\mathrm{supp}}\nolimits}
\newtheorem{thm}{Theorem}[section]
\newtheorem{cor}[thm]{Corollary}
\newtheorem{prop}[thm]{Proposition}
\newtheorem{lem}[thm]{Lemma}
\theoremstyle{remark}
\newtheorem{rem}[thm]{Remark}
\theoremstyle{definition}
\title{$L^{2}$-estimates for the linear elastic waves}
\date{}
\begin{document}
\maketitle

\numberwithin{equation}{section}
\begin{abstract}
This paper is concerned with the large time behavior of the solution to the Cauchy problem for the elastic wave equations. 
In particular, optimal $L^{2}$ estimates of the elastic waves are obtained
in the sense that the upper and lower bounds of the $L^{2}$ norms of each component of the solution are proved for large $t$, 
under the minimum assumptions necessary regarding regularity with respect to initial data. 
The proof is based on the approximation of the solution by a smooth auxiliary function with suitable parameters.
\end{abstract}

\noindent
\textbf{Keywords: } Elastic waves, the Cauchy problem,  lower bounds \\
\noindent
\textbf{2020 Mathematics Subject Classification.} Primary 35L52; Secondary 35B40, 35B45
\section{Introduction}
In this paper, we consider the Cauchy problem of the elastic wave equation: 
\begin{equation} \label{eq:1.1}
\left\{
\begin{split}
& \partial_{t}^{2} u -\mu \Delta u -(\lambda+\mu) \nabla ({\rm div} u) =0, \quad t>0, \quad x \in \R^{n}, \\
& u(0,x)=f_{0}(x), \quad \partial_{t} u(0,x)=f_{1}(x) , \quad x \in \R^{n} 
\end{split}
\right.
\end{equation}
in the space dimension $n \ge 2$. 
The vector displacement $u=u(t,x)$ at the point $(t,x)$, $x=(x_{1}, x_{2}, \cdots , x_{n})$ represents the $n$ dimensional vector function
$u=\mathstrut^{T}(u_{1}, u_{2}, \cdots, u_{n})$.
The vector functions  
$f_{j}(x)=\mathstrut^{T}(f_{j 1}, f_{j 2}, \cdots ,f_{j n})$ for $j=0,1$ denote the initial data.
Here and subsequently, 
``$\mathstrut^{T}\! A$'' stands for the transpose of the matrix $A$.
The Lam\'e constants $\lambda$ and $\mu$ satisfy 
\begin{equation} \label{eq:1.2}
\lambda +2 \mu>0, \qquad \mu >0, 
\end{equation}
which are independent of $t$ and $x$.
Our aim is to derive the upper and lower bound estimates of the $L^{2}$ norm of each component of the solution to \eqref{eq:1.1} $\| u_{j}(t) \|_{L^{2}(\mathbb{R}^{n})}$
for $j=1,2, \cdots, n$.
As a result we conclude the large time behavior of the solution to \eqref{eq:1.1} in the $L^{2}$.

An attempt in this direction has been made explicitly in \cite{I2} for the Cauchy problem of the wave equation: 
\begin{equation} \label{eq:1.3}
\left\{
\begin{split}
& \partial_{t}^{2} w -\alpha^{2} \Delta w =0, \quad t>0, \quad x \in \R^{n}, \\
& w(0,x)=w_{0}(x), \quad \partial_{t} w(0,x)=w_{1}(x) , \quad x \in \R^{n}, 
\end{split}
\right.
\end{equation}
where $n \ge 1$, $\alpha>0$ and $w=w(t,x):(0,\infty) \times \mathbb{R}^{n} \to \mathbb{R}$.
The problem \eqref{eq:1.3} corresponds to the problem \eqref{eq:1.1} with $\lambda+\mu=0$ when there is no interaction between the components of $u$.
One of the motivation of the estimate of the $L^{2}$ norm of $u$ is that it is 
essential for the sharp estimate of the local energy decay, which is still a major problem of the wave equation and the elastic wave equation with long history (cf. \cite{Morawetz}, \cite{Kawashita}, \cite{MST} and \cite{I2} and the references therein).
In particular, 
Ikehata \cite{I2} proved that
the sharp estimates of the $L^{2}$ norms of $w$ are proved as follows;
\begin{equation} \label{eq:1.4}
\begin{split}
C \left|\int_{\mathbb{R}} w_{1}(x) dx \right| \sqrt{t} \le \| w(t) \|_{L^{2}(\mathbb{R})}, \quad t \gg 1,
\end{split}
\end{equation}
\begin{equation} \label{eq:1.5}
\begin{split}
\| w(t) \|_{L^{2}(\mathbb{R})} 
\le C(\| w_{0} \|_{L^{2} \cap L^{1}(\mathbb{R})} +\| w_{1} \|_{L^{2} \cap L^{1}(\mathbb{R})}) \sqrt{t}, \quad t \ge 0
\end{split}
\end{equation}
for $n=1$ and 
\begin{equation} \label{eq:1.6}
\begin{split}
C\left|\int_{\mathbb{R}^{2}} w_{1}(x) dx \right| \sqrt{\log t} \le \| w(t) \|_{L^{2}(\mathbb{R}^{2})},
\quad t \gg 1,
\end{split}
\end{equation}
\begin{equation} \label{eq:1.7}
\begin{split}
\| w(t) \|_{L^{2}(\mathbb{R}^{2})} 
\le C(\| w_{0} \|_{L^{2} \cap L^{1}(\mathbb{R}^{2})} +\| w_{1} \|_{L^{2} \cap L^{1}(\mathbb{R}^{2})}) \sqrt{\log(t+2)}, \quad t \ge 0
\end{split}
\end{equation}
for $n=2$. 
In addition, the results in \cite{I2} have been extended in \cite{CheI1}:
\begin{equation} \label{eq:1.8}
\begin{split}
C \left|\int_{\mathbb{R}^{n}} w_{1}(x) dx \right| \le \| w(t) \|_{L^{2}(\mathbb{R}^{n})}, \quad t \gg 1,
\end{split}
\end{equation}
\begin{equation} \label{eq:1.9}
\begin{split}
\| w(t) \|_{L^{2}(\mathbb{R}^{n})} 
\le C(\| w_{0} \|_{L^{2} \cap L^{1}(\mathbb{R}^{n})} +\| w_{1} \|_{L^{2} \cap L^{1}(\mathbb{R}^{n})}),\quad t \ge 0
\end{split}
\end{equation}
for $n \ge 3$. 
Here we note that 
the estimates \eqref{eq:1.4}-\eqref{eq:1.9} precisely shows the large time behavior of 
$\| u(t) \|_{2}$. 
However, instead of $L^{1}$, 
a weighted space 
$L^{1,1}(\mathbb{R}^{n})
:= \{ f \in L^{1}(\mathbb{R}^{n};\mathbb{R})  (1+|x|) f\ \in L^{1}(\mathbb{R}^{n};\mathbb{R})  \}$ is used to obtain the lower bound estimates \eqref{eq:1.4},
\eqref{eq:1.6} and \eqref{eq:1.8}.
In the method developed by \cite{I2} and \cite{CheI1},  
it seems that the assumption $w_{1} \in L^{1,1}$ plays essential role.

On the other hand, 
Charao-Ikehata \cite{ChaI1} mentioned that the lower bound estimates of the $L^{2}$ norm are open and
obtained the upper estimate for the solution to \eqref{eq:1.1} with $n=2$ as follows; 
\begin{equation} \label{eq:1.10}
\begin{split}
& \| u(t) \|_{
\{ L^{2}(\mathbb{R}^{2}) \}^{2}
} \\
& \le C(\| f_{0} \|_{ 
\{ L^{2}(\mathbb{R}^{2}) \}^{2}
} 
+\| f_{1} \|_{ 
\{ L^{\infty}(\mathbb{R}^{2}) \}^{2}
} + \| f_{1} \|_{ 
\{ L^{1}(\mathbb{R}^{2}) \}^{2}
} \sqrt{\log(2L + (\lambda+2 \mu ) t)}
)
\end{split}
\end{equation}
for $t \gg 1$ under the assumption 
\begin{equation} \label{eq:1.11}
\begin{split}
\supp f_{0} \cup \supp f_{1} \subset \{ x \in \mathbb{R}^{2} : |x| \le L \} 
\end{split}
\end{equation}
for some $L>0$,
in the process of energy estimates for elastic waves with the space-variable coefficient damping term. 
Recently, Ikehata \cite{Ikehata} proved the estimates of the $L^{2}$ norm for the solution $u=\mathstrut^{T}(u_{1}, u_{2})$ to \eqref{eq:1.1} with $f_{0}=0$ and sufficiently smooth, rapidly decaying initial data $f_{1}$ when $n=2$:
\begin{equation} \label{eq:1.12}
\begin{split}
\| u(t) \|_{
\{ L^{2}(\mathbb{R}^{2}) \}^{2}
} & \ge \| u_{j}(t) \|_{
L^{2}(\mathbb{R}^{2}) 
} \ge C \left( \left|\int_{\mathbb{R}^{2}} f_{11}(x) dx \right| +\left|\int_{\mathbb{R}^{2}} f_{12}(x) dx \right| \right) \sqrt{\log t}, \quad t \gg 1, \\
\| u(t) \|_{
\{ L^{2}(\mathbb{R}^{2}) \}^{2} 
} 
& \le C(\| f_{1} \|_{ 
\{ L^{2}(\mathbb{R}^{2}) \}^{2}
} 
+\| f_{1} \|_{ 
\{ L^{1}(\mathbb{R}^{2}) \}^{2} 
} 
 \sqrt{\log (t+2)} ), \quad t \ge 0 
\end{split}
\end{equation}
for $j=1,2$. 
He also dealt with the situation that the elastic waves are decomposed into P-wave and S-wave.

In contrast to these previous studies, 
we are interested in the sharp estimates of the $L^{2}$ norm for each component under the minimal regularity of the initial data needed to prove it. 
To this end, 
we fix $N \in \{ 1,2, \cdots, n \}$ and consider the estimate of the $L^{2}$ norm of $u_{N}$, the $N$th component of $u$, for $(f_{0}, f_{1}) \in \{ H^{1} \}^{n} \times \{ L^{2} \}^{n}$.

To state our results precisely, we summarize the notation.  

For $\kappa \in \mathbb{Z}_{+} := \mathbb{N} \cup \{ 0 \}$ and $\gamma=
\mathstrut^{T}(\gamma_{1}, \gamma_{2}, \cdots , \gamma_{n}) \in \mathbb{Z}_{+}^{n}$ 
with $|\kappa|=\gamma := \displaystyle\sum_{k=1}^{n} \gamma_{k}$, 
we define the $\kappa$th moment of $f \in L^{1, \kappa}(\mathbb{R}^{n}; \mathbb{R})$ by 
\begin{equation*}
\begin{split}
m_{f, \gamma} := \frac{1}{\gamma! }\int_{\mathbb{R}^{n}} (-x)^{\gamma}f(x) dx,
\end{split}
\end{equation*} 
where 
$\gamma:= \gamma_{1}! \gamma_{2}! \cdots \gamma_{n}! $ and 
$(-x)^{\gamma}:= (-1)^{|\gamma|} x_{1}^{\gamma_{1}} x_{2}^{\gamma_{2}} 
\cdots x_{n}^{\gamma_{n}}$. 
For the simplicity, we denote 
$M_{j}:=\mathstrut^{T}(m_{j 1}, m_{j 2}, \cdots ,m_{j n})$
and 
\begin{equation*}
\begin{split}
P_{j k}:=  \mathstrut^{T}(p_{jk,1}, p_{jk,2}, \cdots ,p_{jk,n}) \in \mathbb{R}^{n}, 
\end{split}
\end{equation*}
where 
\begin{equation*}
\begin{split}
m_{jk} := m_{f_{jk}, 0},
\end{split}
\end{equation*}
\begin{equation*}
\begin{split}
p_{jk, \ell}:= m_{f_{jk}, \mathbf{e}_{\ell}}= -\int_{\mathbb{R}^{n}} x_{\ell} f_{jk}(x) dx 
\end{split}
\end{equation*}
for $j=0,1$ and $k,\ell=1,2, \cdots, n$.
Here $\mathbf{e}_{\ell}$ represents the unit vector in the $x_{\ell}$ direction. 
We also define
\begin{equation*} 
\begin{split}
|M_{j}|:=\sum_{k=1}^{n} |m_{j,k}|, \quad 
|P_{jk}|:= \sum_{\ell=1}^{n} |p_{jk, \ell}|
\end{split}
\end{equation*}
and
\begin{equation*} 
\begin{split}
|\mathbb{P}_{j,N}| :=
|P_{jN}|
+\sum_{k=1}^{n} |p_{jk,N}|
+
\sum_{k=1}^{n}  |p_{jk,k}| 
+ \sum_{ \substack{ k, \ell =1 \\ k< \ell \\
k \neq N, \ell \neq N} }^{n} |p_{jk, \ell} + p_{j \ell , k}| 
\end{split}
\end{equation*}
for $j=0,1$.
We denote by $L^{p}=L^{p}(\mathbb{R}^{n}; \mathbb{R})$ the usual $L^{p}$ space with the norm $\| \cdot \|_{L^{p}} := \| \cdot \|_{L^{p}(\mathbb{R}^{n})}$.
For the simplicity, we denote 
$\| f \|_{p}= \| f \|_{\{ L^{p} \}^{n}}$, 
where we define 
$\| f \|_{\{ L^{p} \}^{n}}:= \sum_{j=1}^{n} \| f_{j} \|_{L^{p}}$
for $f = \mathstrut^{T}(f_{1}, f_{2}, \cdots, f_{n})$ and $f_{j} \in L^{p}$, $j=1,2, \cdots, n$. 
Similarly, we denote $\| f \|_{1,1} =\| f \|_{\{ L^{1,1} \}^{n}}:= \sum_{j=1}^{n} \| f_{j} \|_{L^{1,1}}$.

Our first result indicates the sharp $L^{2}$ estimate of each component of the solution to \eqref{eq:1.1} as $t \to \infty$ for $(f_{0},  f_{1}) \in \{ H^{1} \cap L^{1} \}^{n} \times \{ L^{2} \cap L^{1} \}^{n}$.
\begin{thm} \label{thm:1.1}
Suppose that \eqref{eq:1.2} and 
$(f_{0},  f_{1}) \in \{ H^{1} \cap L^{1} \}^{n} \times \{ L^{2} \cap L^{1} \}^{n}$.
Then the solution of \eqref{eq:1.1}, $u$, belongs to 
$$
C([0, \infty); H^{1}) \cap C^{1}([0, \infty); L^{2}).
$$
In particular,
the $N$th component of solution $u_{N}$ satisfies the following estimates:\\
\begin{equation} \label{eq:1.13}
\begin{split}
\| u_{N}(t) \|_{L^{2}} \le \begin{cases}
& C( \| f_{0} \|_{2} + \| f_{1} \|_{1} \sqrt{\log (t+2)} + \| f_{1} \|_{2}), \quad n=2, \\
& C( \| f_{0} \|_{2} + \| f_{1} \|_{1}+ \| f_{1} \|_{2}), \quad n \ge 3
\end{cases}
\end{split}
\end{equation}
for $t \ge 0$. 
In addition, suppose that 
\begin{equation} \label{eq:1.14}
\begin{split}
\lambda + \mu \neq 0.
\end{split}
\end{equation}
If either $|M_{0}| \neq 0$ or $|M_{1}| \neq 0$, 
then 
\begin{equation} \label{eq:1.15}
\begin{split}
\| u_{N}(t) \|_{L^{2}} \ge \begin{cases}
& C |M_{0}|+C |M_{1}| \sqrt{\log (t+2)}, \quad n=2, \\
& C(|M_{0}| +|M_{1}|), \quad n \ge 3
\end{cases}
\end{split}
\end{equation}
for $t \gg 1$.
\end{thm}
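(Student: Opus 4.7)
The approach begins with a Fourier representation of the solution. Since the Lam\'e symbol diagonalizes under the Helmholtz projections $P_{\xi} := \xi\xi^{T}/|\xi|^{2}$ and $Q_{\xi} := I - P_{\xi}$, one obtains the explicit formula
\begin{align*}
\wh u(t,\xi) &= \left[\cos(c_{P}|\xi|t)\,P_{\xi} + \cos(c_{S}|\xi|t)\,Q_{\xi}\right]\wh f_{0}(\xi) \\
&\quad + \left[\frac{\sin(c_{P}|\xi|t)}{c_{P}|\xi|}\,P_{\xi} + \frac{\sin(c_{S}|\xi|t)}{c_{S}|\xi|}\,Q_{\xi}\right]\wh f_{1}(\xi),
\end{align*}
where $c_{P}:=\sqrt{\lambda+2\mu}$ and $c_{S}:=\sqrt{\mu}$; assumption \eqref{eq:1.14} is equivalent to $c_{P}\ne c_{S}$. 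The regularity assertion $u\in C([0,\infty);H^{1})\cap C^{1}([0,\infty);L^{2})$ then follows from Plancherel applied to this formula together with the standard continuity of trigonometric multipliers in $t$.

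For the upper bound \eqref{eq:1.13}, I would take the $N$th component of the above identity, apply Plancherel, and split the $\xi$-integral at $|\xi|=1$. On $\{|\xi|>1\}$ the matrix multipliers are bounded, so Plancherel directly controls this piece by $\|f_{0}\|_{2}+\|f_{1}\|_{2}$. On $\{|\xi|\le 1\}$ I use $|\wh f_{j}(\xi)|\le\|f_{j}\|_{1}$ together with $|\sin(c|\xi|t)/(c|\xi|)|\le\min(t,(c|\xi|)^{-1})$; the remaining weight integral $\int_{|\xi|\le 1}\min(t,|\xi|^{-1})^{2}\,d\xi$ is of order $\log(t+2)$ for $n=2$ and $O(1)$ for $n\ge 3$, producing precisely the dichotomy appearing in \eqref{eq:1.13}.

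For the lower bound \eqref{eq:1.15}, I would introduce a smooth cutoff $\chi\in C_{c}^{\infty}(\R^{n})$ with $\chi(0)=1$ and define the smooth auxiliary function $v_{N}$---the ``approximation with suitable parameters'' of the abstract---by
\begin{equation*}
\wh v_{N}(t,\xi) := \chi(\xi)\sum_{k=1}^{n}\bigl[A^{(0)}_{Nk}(t,\xi)\,m_{0k} + A^{(1)}_{Nk}(t,\xi)\,m_{1k}\bigr],
\end{equation*}
where $A^{(0)}_{Nk}$ and $A^{(1)}_{Nk}$ denote the $(N,k)$-entries of the matrix multipliers appearing in the Fourier formula. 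The reverse triangle inequality $\|u_{N}\|_{L^{2}}\ge\|v_{N}\|_{L^{2}}-\|u_{N}-v_{N}\|_{L^{2}}$ reduces matters to a matching lower bound on $\|v_{N}\|_{L^{2}}$ and a smaller-order upper bound on the error. For $\|v_{N}\|_{L^{2}}$, I pass to polar coordinates $\xi=\rho\omega$: the spherical integrals of $\omega_{N}^{2}(\omega\cdot M_{j})^{2}/c_{P}^{2} + (m_{jN}-\omega_{N}(\omega\cdot M_{j}))^{2}/c_{S}^{2}$ are positive-definite quadratic forms in $M_{j}$ bounded below by a multiple of $|M_{j}|^{2}$, while the off-diagonal cross terms containing $\sin(c_{P}\rho t)\sin(c_{S}\rho t)$ or $\cos(c_{P}\rho t)\cos(c_{S}\rho t)$ remain uniformly bounded in $t$ thanks to $c_{P}\ne c_{S}$ (non-stationary phase in $\rho$). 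The radial integral $\int_{1/t}^{R}\sin^{2}(c\rho t)\rho^{n-3}\,d\rho$ then yields $\log t$ in $n=2$ and a positive constant in $n\ge 3$, and an analogous Riemann--Lebesgue argument on the cosine terms produces the constant contribution in $|M_{0}|$.

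The main obstacle is the error $\|u_{N}-v_{N}\|_{L^{2}}$. I split $\wh f_{jk}(\xi)-\chi(\xi)m_{jk} = \chi(\xi)(\wh f_{jk}(\xi)-m_{jk}) + (1-\chi(\xi))\wh f_{jk}(\xi)$: the second piece is controlled by Plancherel together with the boundedness of the symbols away from the origin. The first piece is the delicate one, because under the hypothesis $f_{jk}\in L^{1}$ only continuity of $\wh f_{jk}$ at the origin is available, with no quantitative rate---the reason the previous works \cite{I2,CheI1,Ikehata} resort to the weighted space $L^{1,1}$. The key observation is that the singular weight $\min(t,|\xi|^{-1})^{2}$ concentrates on the shrinking set $\{|\xi|\lesssim 1/t\}$, where a modulus-of-continuity argument yields $|\wh f_{jk}(\xi)-m_{jk}|=o(1)$ uniformly; combined with a dyadic decomposition of $\{|\xi|\le 1\}$ this gives $\|u_{N}-v_{N}\|_{L^{2}}=o(\sqrt{\log t})$ in $n=2$. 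In $n\ge 3$ the cutoff $\chi$ is chosen supported on a ball small enough (depending on the modulus of continuity of $\wh f_{j}$) so that the error is strictly dominated by the main term $\|v_{N}\|_{L^{2}}$, which closes the argument.
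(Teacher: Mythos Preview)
Your strategy coincides with the paper's: write the Fourier solution via the Helmholtz projections, replace $\wh f_{jk}$ by a localized constant $m_{jk}$, bound the resulting explicit term from below and the remainder from above, and exploit the fact that for $f\in L^{1}$ one can make the remainder small by tuning a parameter. The paper implements this with the Gaussian weight $e^{-\beta|\xi|^{2}}$ rather than a compactly supported $\chi$, and estimates the remainder in physical space (Proposition~\ref{prop:3.3}): it writes $W^{(\alpha,\beta)}_{s}*f-m_{f,0}W^{(\alpha,\beta)}_{s}$ as a convolution, splits the $y$-integral at $|y|=\beta^{1/4}$, Taylor-expands on the inner region, and uses the tail $\int_{|y|\ge\beta^{1/4}}|f|\,dy\to 0$ on the outer one. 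Your Fourier-side argument via the modulus of continuity of $\wh f_{jk}$ and a dyadic sum is equivalent and, for $n=2$, the Ces\`aro argument you sketch does give $o(\log t)$.

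There is, however, a genuine gap in your $n\ge 3$ argument in the asymmetric case $|M_{1}|=0$, $|M_{0}|\neq 0$. Shrinking the support of $\chi$ to a ball of radius $r$ makes the main term (coming only from the cosine/$M_{0}$ piece) of size $\sim |M_{0}|\,r^{n/2}$, while the $f_{1}$-contribution to the error carries the more singular weight $|\xi|^{-1}$ and is only bounded by $\omega_{1}(r)\,r^{(n-2)/2}$, where $\omega_{1}$ is the modulus of continuity of $\wh f_{1}$ at the origin. Dominating the error by the main term would require $\omega_{1}(r)\ll |M_{0}|\,r$, a Lipschitz-type bound on $\wh f_{1}$ at $0$ that $f_{1}\in L^{1}$ does not provide. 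The paper circumvents this by treating the three cases $|M_{0}|,|M_{1}|\neq 0$; $|M_{0}|=0$; $|M_{1}|=0$ separately, and in the last case first decoupling the cosine and sine parts of $\wh u_{N}$: for $n\ge 3$ the functions $e^{-2\beta|\xi|^{2}}|\xi|^{-1}\wh f_{0}\overline{\wh f_{1}}$ etc.\ are in $L^{1}(\R^{n}_{\xi})$, so Riemann--Lebesgue gives
\[
\|e^{-\beta|\xi|^{2}}\wh u_{N}\|_{2}^{2}=\|e^{-\beta|\xi|^{2}}\wh V_{N,c}\|_{2}^{2}+\|e^{-\beta|\xi|^{2}}\wh V_{N,s}\|_{2}^{2}+o(1),
\]
and one may simply drop the nonnegative sine contribution and compare only the $M_{0}$ main term against the $f_{0}$ error, which do scale compatibly. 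Your proposal already invokes Riemann--Lebesgue for the cross terms inside $\|v_{N}\|_{2}$, so the fix is natural, but it must be applied to the full $\wh u_{N}$ (before approximation) rather than only to $\wh v_{N}$.
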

Since the RHS of the estimate \eqref{eq:1.15} is independent of $N$,
we can rephrase the estimates \eqref{eq:1.13} and \eqref{eq:1.15} as follows: 
\begin{cor} \label{cor:1.2}
Assume that \eqref{eq:1.2}, \eqref{eq:1.14} and 
$(f_{0},  f_{1}) \in \{ H^{1} \cap L^{1} \}^{n} \times \{ L^{2} \cap L^{1} \}^{n}$.

\noindent
{\rm (i)} When $n=2$, 
if there exists $k_{0} \in \{1,2, \cdots, n \}$ such that 
$m_{1 k_{0}} \neq 0$, then 
\begin{equation} \label{eq:1.16}
\begin{split}
C |m_{1 k_{0}}| \sqrt{\log(t+2)} \le 
\| u_{j}(t) \|_{L^{2}} \le \| u(t) \|_{2} \le C( \| f_{0} \|_{2} + \| f_{1} \|_{1}  \sqrt{\log (t+2)} + \| f_{1} \|_{2})
\end{split}
\end{equation}
for all $j \in \{1,2 \}$ and $t \gg 1$.

\noindent
{\rm (ii)} When $n \ge 3$, 
if there exist $j_{0} \in \{0,1 \}$, $k_{0} \in \{1,2, \cdots, n \}$ such that 
$m_{j_{0} k_{0}} \neq 0$, then 
\begin{equation} \label{eq:1.17}
\begin{split}
C |m_{j_{0} k_{0}}| \le \| u_{j}(t) \|_{L^{2}} \le \| u(t) \|_{2} \le C( \| f_{0} \|_{2} + \| f_{1} \|_{1}+ \| f_{1} \|_{2})
\end{split}
\end{equation}
for all $j \in \{1,2, \cdots, n \}$ and $t \gg 1$.
\end{cor}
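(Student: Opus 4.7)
The plan is to derive Corollary \ref{cor:1.2} directly from Theorem \ref{thm:1.1}, so the argument reduces to repackaging the bounds in that theorem. Two trivial observations do the work: first, by the definition $\|u(t)\|_{2} = \sum_{k=1}^{n} \|u_{k}(t)\|_{L^{2}}$, we have the sandwiching relation $\|u_{j}(t)\|_{L^{2}} \le \|u(t)\|_{2}$ for every component index $j$; second, for any fixed $j_{0} \in \{0,1\}$ and $k_{0} \in \{1,\dots,n\}$ we have $|M_{j_{0}}| = \sum_{k=1}^{n} |m_{j_{0} k}| \ge |m_{j_{0} k_{0}}|$, so the nonvanishing of a single moment $m_{j_{0} k_{0}}$ immediately triggers the hypothesis ``$|M_{0}| \neq 0$ or $|M_{1}| \neq 0$'' required by the lower bound part of Theorem \ref{thm:1.1}.

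For the upper bounds in \eqref{eq:1.16} and \eqref{eq:1.17}, I fix $j$ and apply \eqref{eq:1.13} component-wise to bound $\|u_{j}(t)\|_{L^{2}}$, and then sum that same estimate over $k = 1, \dots, n$ to bound $\|u(t)\|_{2}$; the factor $n$ that arises in the summation is absorbed into the constant $C$. This yields both the middle and right inequalities in the statement with the correct $t$-behavior ($\sqrt{\log(t+2)}$ when $n=2$ and a $t$-uniform bound when $n \ge 3$).

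For the lower bounds, I again fix $j \in \{1,\dots,n\}$ and invoke \eqref{eq:1.15} with $N = j$. In case (i) with $n=2$, the assumption $m_{1 k_{0}} \neq 0$ gives $|M_{1}| \ge |m_{1 k_{0}}| > 0$, and since $|M_{0}| \ge 0$, Theorem \ref{thm:1.1} yields $\|u_{j}(t)\|_{L^{2}} \ge C|M_{1}|\sqrt{\log(t+2)} \ge C|m_{1 k_{0}}| \sqrt{\log(t+2)}$ for $t \gg 1$. In case (ii) with $n \ge 3$, the assumption $m_{j_{0} k_{0}} \neq 0$ gives $|M_{0}| + |M_{1}| \ge |M_{j_{0}}| \ge |m_{j_{0} k_{0}}| > 0$, so Theorem \ref{thm:1.1} gives $\|u_{j}(t)\|_{L^{2}} \ge C(|M_{0}|+|M_{1}|) \ge C|m_{j_{0} k_{0}}|$ for $t \gg 1$.

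There is no genuine obstacle: the corollary is essentially a restatement of Theorem \ref{thm:1.1} in which one trades the collective quantities $|M_{0}|, |M_{1}|$ for the single coefficient $|m_{j_{0} k_{0}}|$, and one uses the crude comparison $\|u_{j}\|_{L^{2}} \le \|u\|_{2}$ to obtain a uniform statement across components. The only subtlety worth flagging is that the constants $C$ appearing in \eqref{eq:1.13} and \eqref{eq:1.15} must be independent of the component index $N$, which is implicit in the formulation of Theorem \ref{thm:1.1}; this independence is what allows us to sum over $k$ in the upper bound and to make the lower bound uniform over $j$.
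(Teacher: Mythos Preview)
Your proposal is correct and matches the paper's approach: the paper simply remarks that since the right-hand side of \eqref{eq:1.15} is independent of $N$, the corollary is an immediate rephrasing of \eqref{eq:1.13} and \eqref{eq:1.15}, which is exactly the reduction you carry out in detail.
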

\begin{rem}
Both the RHS of \eqref{eq:1.16} and the one of \eqref{eq:1.17} imply the interaction between the components of the solution.
\end{rem}
Next we consider the $L^{2}$ estimates of solution to \eqref{eq:1.1} for $(f_{0},  f_{1}) \in \{ H^{1} \cap L^{1,1} \}^{n} \times \{ L^{2} \cap L^{1,1} \}^{n}$. 
This situation also corresponds to the lower bound estimate in the case $|M_{0}|=0$ and $|M_{1}|=0$.
\begin{thm} \label{thm:1.4}
Suppose that \eqref{eq:1.2} and 
$(f_{0},  f_{1}) \in \{ H^{1} \cap L^{1,1} \}^{n} \times \{ L^{2} \cap L^{1,1} \}^{n}$.
Then the following statements hold.
\begin{equation} \label{eq:1.18}
\begin{split}
\| u_{N}(t) \|_{2} \le  C( \| f_{0} \|_{2}+|M_{1}| \sqrt{\log(t+2)} +  \| f_{1} \|_{1,1} )
\end{split}
\end{equation}
for $n=2$ and $t \ge 0$.
Furthermore, suppose that \eqref{eq:1.14} and $t \gg 1$. \\
{\rm (i)} When $n=2$,
if $\displaystyle\sum_{j=0}^{1} \sum_{k=1}^{2}|P_{jk}| \neq 0$,
then 
\begin{equation} \label{eq:1.19}
\begin{split}
\| u_{N}(t) \|_{2} 
& \ge 
C (|M_{0}|+ |M_{1}| \sqrt{\log(t+2)}
+ \sum_{j=0}^{1} \sum_{k=1}^{2}|P_{jk}| ).
\end{split}
\end{equation}
{\rm (ii)} When $n \ge 3$, 
if either $|\mathbb{P}_{0,N}| \neq 0$ or $|\mathbb{P}_{1,N}| \neq 0$ holds,
then
\begin{equation} \label{eq:1.20}
\begin{split}
\| u_{N}(t) \|_{2} \ge C(|M_{0}|+ |M_{1}|+ |\mathbb{P}_{0,N}| +|\mathbb{P}_{1,N}|).
\end{split}
\end{equation}
\end{thm}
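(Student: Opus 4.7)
The plan is to work in Fourier variables and exploit the explicit solution formula for the elastic system. After diagonalising the symbol via the longitudinal/transverse projectors $\Pi_{P}(\xi)=\xi\otimes\xi/|\xi|^{2}$ and $\Pi_{S}(\xi)=I-\Pi_{P}(\xi)$, the Fourier transform of the solution reads
\begin{equation*}
\hat{u}(t,\xi) = \sum_{\alpha \in \{P,S\}} \left( \cos(c_{\alpha}|\xi|t)\,\Pi_{\alpha}(\xi)\,\hat{f}_{0}(\xi) + \frac{\sin(c_{\alpha}|\xi|t)}{c_{\alpha}|\xi|}\,\Pi_{\alpha}(\xi)\,\hat{f}_{1}(\xi)\right),
\end{equation*}
with $c_{P}=\sqrt{\lambda+2\mu}$ and $c_{S}=\sqrt{\mu}$. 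Plancherel turns $\|u_{N}(t)\|_{L^{2}}^{2}$ into an integral over $\mathbb{R}^{n}$ of $|\hat{u}_{N}(t,\xi)|^{2}$, which I split into the low-frequency region $\{|\xi|\le \delta\}$ and its complement for a fixed small $\delta>0$. The high-frequency part is routine and is absorbed into $\|f_{0}\|_{2}+\|f_{1}\|_{1,1}$: after isolating the mean $m_{1}$ from $\hat{f}_{1}$ via a fixed bump $\chi$ with $\int \chi = 1$, the remaining singular factor $1/|\xi|$ is tamed by the pointwise bound $|\hat{f}_{1}(\xi)-m_{1}|\le |\xi|\,\|f_{1}\|_{1,1}$.

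For the low-frequency region I would use the following Taylor-like splitting, which drives the whole argument. Because each $f_{jk}\in L^{1,1}$, the Fourier transform $\hat{f}_{jk}$ is differentiable at $0$, and
\begin{equation*}
\hat{f}_{jk}(\xi) = m_{jk} + i\sum_{\ell=1}^{n}\xi_{\ell}\,p_{jk,\ell} + r_{jk}(\xi),\qquad |r_{jk}(\xi)|\le |\xi|\,\omega_{jk}(|\xi|),
\end{equation*}
with $\omega_{jk}(s)\to 0$ as $s\to 0^{+}$. Inserting this expansion into $\hat{u}_{N}$, passing to polar coordinates $\xi=r\sigma$ with $\sigma\in S^{n-1}$, and using Plancherel, the contribution of each family of moments to $\|u_{N}(t)\|_{L^{2}}^{2}$ takes the form of an integral in $r\in(0,\delta)$ of $r^{n-1}$ times an angular integral on $S^{n-1}$ of monomials in $\sigma$ multiplied by trigonometric factors in $c_{\alpha}rt$. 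The $m_{1k}$-terms produce $\int_{0}^{\delta}r^{n-3}\sin^{2}(c_{\alpha}rt)\,dr$, which is $O(\log(t+2))$ when $n=2$ and $O(1)$ when $n\ge 3$; the $m_{0k}$- and $p_{jk,\ell}$-terms produce $O(1)$ in every dimension; and the remainder $r_{jk}/|\xi|$ is absorbed into $\|f_{1}\|_{1,1}$. Combined with the high-frequency bound, this gives \eqref{eq:1.18}.

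For the lower bounds I would choose $\delta$ small and $t$ sufficiently large so that the remainder terms are dominated by the leading ones, and then extract the leading positive contribution. The main point is that, by orthogonality of monomials on the sphere, the different moment families $\{m_{jk}\}$ and $\{p_{jk,\ell}\}$ contribute additively and with strictly positive angular factors, while the cross-moments between them are killed by parity on $S^{n-1}$, and the $P$--$S$ cross-terms are $o(1)$ as $t\to\infty$ by oscillation; the latter uses $c_{P}\neq c_{S}$, which is exactly the content of \eqref{eq:1.14}. The $|M_{j}|$-contribution is then isolated from $\int_{S^{n-1}}|\Pi_{\alpha}(\sigma)M_{j}|^{2}\,d\sigma$, which evaluates to a positive multiple of $|M_{j}|^{2}$. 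The $|\mathbb{P}_{j,N}|$-contribution arises from pairing first-order Taylor coefficients $i\xi_{\ell}p_{jk,\ell}$ through $\Pi_{\alpha}$ and reading off which index combinations survive the angular average of fourth-order monomials of the form $(\sigma_{N}\sigma_{k}/|\sigma|^{2})\sigma_{\ell}\cdot(\sigma_{N}\sigma_{k'}/|\sigma|^{2})\sigma_{\ell'}$; the four families $|P_{jN}|$, $|p_{jk,N}|$, $|p_{jk,k}|$ and $|p_{jk,\ell}+p_{j\ell,k}|$ appearing in the definition of $|\mathbb{P}_{j,N}|$ are exactly the combinations that are not killed by this parity/symmetry screening.

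The hardest step will be the $n\ge 3$ lower bound \eqref{eq:1.20} and ensuring that these four families do not cancel one another in the final sum. This requires a careful evaluation of the relevant spherical moments of degree four on $S^{n-1}$, together with \eqref{eq:1.14} to decouple the $P$- and $S$-projector contributions into two independent non-negative quadratic forms in the moments, each of which can be bounded below by a constant multiple of $|M_{j}|^{2}+|\mathbb{P}_{j,N}|^{2}$. The two-dimensional lower bound \eqref{eq:1.19} is the analogous computation on $S^{1}$, in which the singular $\sin^{2}(c_{\alpha}rt)/r^{2}$ factor produces the $|M_{1}|\sqrt{\log(t+2)}$ term and the remaining families contribute $O(1)$ lower bounds that combine additively as in the statement.
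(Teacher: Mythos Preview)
Your strategy is essentially the one in the paper: expand $\hat f_{jk}$ to first order at $\xi=0$, decouple the resulting pieces via angular parity and Riemann--Lebesgue (which is where \eqref{eq:1.14} enters), and then bound the surviving spherical quadratic forms below by the moment quantities. The paper implements the low-frequency localisation with a Gaussian weight $e^{-\beta|\xi|^{2}}$ and a large parameter $\beta$ (Proposition~\ref{prop:3.3}, together with \eqref{eq:5.1}) rather than your sharp cutoff $|\xi|\le\delta$, and it handles the remainder by a physical-space Taylor splitting of the convolution instead of your Fourier-side expansion of $\hat f$; these are equivalent realisations of the same idea and lead to the same spherical integrals $K_{j1}^{(N)},K_{j2}^{(N)},K_{j3}^{(N)}$ (Lemma~\ref{Lem:5.1}).

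There is one point where your sketch is not correct as written. After decoupling, the $S$-projector first-moment form is $K_{j1}^{(N)}-2K_{j2}^{(N)}+K_{j3}^{(N)}$ and the $P$-projector first-moment form is $K_{j3}^{(N)}$; neither of these \emph{individually} dominates $|\mathbb{P}_{j,N}|^{2}$. For instance, if $p_{jk,\ell}=\delta_{k\ell}$ then the $S$-form vanishes identically while $|\mathbb{P}_{j,N}|>0$, and if the only nonzero entries are $p_{jN,k}=-p_{jk,N}$ for some $k\neq N$ then $K_{j3}^{(N)}=0$ while $|\mathbb{P}_{j,N}|>0$. What is true, and what the paper proves in \eqref{eq:5.22} and the display following it, is that the \emph{sum} (for the $\cos$-part this is $K_{01}^{(N)}-2K_{02}^{(N)}+2K_{03}^{(N)}$, for the $\sin$-part it is $\tfrac{1}{\alpha_{2}^{2}}(K_{11}^{(N)}-2K_{12}^{(N)}+K_{13}^{(N)})+\tfrac{1}{\alpha_{1}^{2}}K_{13}^{(N)}$) controls $|\mathbb{P}_{j,N}|^{2}$. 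So in your write-up you should combine the two projector contributions before comparing with $|\mathbb{P}_{j,N}|$, rather than claim that each does the job on its own.
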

Comparing related results, our result have the advantages in the following points. 
One is the concerned with the regularity assumption of the initial data.
Due to Proposition 3.3 below, 
we can obtain a component-wise lower bound estimate even if 
$f_{0}$ and $f_{1}$ belong to $\{ L^{1} \}^{n}$ instead of $\{ L^{1,1} \}^{n}$. 
Therefore Theorem \ref{thm:1.1}
improves the estimate \eqref{eq:1.12} for the 2-d case in \cite{Ikehata} 
and completely answers 
to the $n$-d case problem for $n \ge 3$ proposed in \cite{ChaI1}.
Moreover, 
in the case of wave equations and elastic waves, 
when considering application to nonlinear problems, 
the estimates in $L^{2} \cap L^{1}$ seem to be easier to apply than $L^{2} \cap L^{1,1}$. 
For the nonlinear theory of the elastic waves, we refer to \cite{Sideris}, \cite{A}, \cite{HidanoZha} and \cite{Zha} and the references therein.
Second, under the assumption $f_{0}, f_{1} \in \{ L^{1,1} \}^{n}$, 
we obtain more detailed estimates in the form of a more explicit statement of how the components interact with each other.
From the estimates \eqref{eq:1.19} and \eqref{eq:1.20} in Theorem \ref{thm:1.4}, 
we see that there are structural differences in the assumptions between 2-d case and $n$-d case with $n \ge 3$.

Of course, 
our method is also applicable to the case of the wave equation \eqref{eq:1.3}. However, 
since the proof is almost similar, 
although some proofs are required, 
we omit them and only summarize the results in the appendix. 
This gives an improvement over previous studies \eqref{eq:1.4}, \eqref{eq:1.6} and \eqref{eq:1.8} and 
another proof of the Appendix in \cite{CT}.

The rest of the paper is organized as follows:
We introduce notations and mention auxiliary lemmas used below.
Section 3 is devoted to the proof of Proposition \ref{prop:3.3} which is needed to choose the initial data from $\{ H^{1} \cap L^{1} \}^{n} \times \{ L^{2} \cap L^{1} \}^{n}$ in the lower bound estimate instead of 
the weighted space $\{ H^{1} \cap L^{1,1} \}^{n} \times \{ L^{2} \cap L^{1,1} \}^{n}$. 
In section 4 and 5, we prove main results.
In particular, in section 4, we show Theorem \ref{thm:1.1} and in section 5, we deal with 
$(f_{0}, f_{1}) \in \{ H^{1} \cap L^{1,1} \}^{n} \times \{ L^{2} \cap L^{1,1} \}^{n}$ and we will show a more detailed dependence on the initial data in the lower bound estimate as Theorem \ref{thm:1.4}.
In the appendix, 
we are going to mention the estimates of the wave equation that corresponds to when the assumption $\lambda + \mu =0$.
\section{Preliminaries}
In this section, 
we shall summarize the notation and basic results that we will use throughout this paper.
\subsection{Solution formula and notations}
In this subsection, we recall the solution formula of the Cauchy problem \eqref{eq:1.1}.

Denoting by $M(\mathbb{R};n)$ the set of all $n \times n$ matrices over $\mathbb{R}$,
we define   
$I := {\rm diag}(1,1, \cdots 1) \in M(\mathbb{R};n)$ and
$P:=\frac{\xi}{|\xi|} \otimes \frac{\xi}{|\xi|} \in M(\mathbb{R};n)$.
We also define  
$\alpha_{1}:= \sqrt{\lambda+2 \mu}>0$ and 
$\alpha_{2}:= \sqrt{\mu}>0$ for the simplicity.
Then,  
we have the expression of the solution $u$ to \eqref{eq:1.1} in the Fourier space as follows;
\begin{equation*}
\begin{split}
\hat{u}(t,\xi) & = \cos(t \alpha_{1} |\xi|) P \hat{f}_{0} 
+ \frac{\sin(t \alpha_{1} |\xi|)}{\alpha_{1}|\xi|} P \hat{f}_{1} \\
& + \cos(t \alpha_{2} |\xi|) (I-P) \hat{f}_{0} 
+ \frac{\sin(t \alpha_{2} |\xi|)}{\alpha_{2}|\xi|} (I- P) \hat{f}_{1}.
\end{split}
\end{equation*}
Namely, for $N=1,2, \cdots, n$,
the $N$th component of $\hat{u}$, $\hat{u}_{N}$, can be represented in the following way;
\begin{equation} \label{eq:2.1}
\begin{split}
\hat{u}_{N}(t,\xi) & = \hat{V}_{N,c}(t) + \hat{V}_{N,s}(t),
\end{split}
\end{equation}
where 
\begin{equation*} 
\begin{split}
\hat{V}_{N,c}(t,\xi) & := \cos(t \alpha_{2}  |\xi|) \hat{f}_{0N} 
+ \left( \cos(t \alpha_{1} |\xi|) - \cos(t \alpha_{2} |\xi|) \right)
\frac{
\xi_{N} (\xi \cdot \hat{f}_{0})
}{
|\xi|^{2}
},  \\
\hat{V}_{N,s}(t, \xi) & :=\frac{\sin(t \alpha_{2} |\xi|)}{\alpha_{2}|\xi|} \hat{f}_{1N} 
+ \left(
\frac{\sin(t \alpha_{1} |\xi|)}{\alpha_{1}|\xi|} 
- \frac{\sin(t \alpha_{2} |\xi|)}{\alpha_{2}|\xi|} 
\right)
\frac{\xi_{N} (\xi \cdot \hat{f}_{1}) }{|\xi|^{2}}.
\end{split}
\end{equation*}
Here, 
``$\cdot$'' represents the inner product in $\mathbb{R}^{n}$,
$\hat{u}(t, \xi)=\mathstrut^{T}(\hat{u}_{1}, \hat{u}_{2}, \cdots, \hat{u}_{n})$
and 
$\hat{f}_{j}(\xi)=\mathstrut^{T}(\hat{f}_{j1}, \hat{f}_{j2}, \cdots ,\hat{f}_{jn})$ for $j=0,1$.
We decompose the function $\hat{u}_{N}$ into four parts. We do this in two ways:
\begin{equation} \label{eq:2.2}
\begin{split}
\hat{u}_{N}(t,\xi)
= \hat{U}_{N,c}(t,\xi) +\hat{U}_{N,s}(t,\xi) +\hat{R}_{N,c}(t,\xi) +\hat{R}_{N,s}(t,\xi),
\end{split}
\end{equation}
where
\begin{equation*} 
\begin{split}
\hat{U}_{N,c}(t,\xi) &:= 
\cos(t \alpha_{2} |\xi|) m_{0N} +\left( \cos(t \alpha_{1} |\xi|) - \cos(t \alpha_{2} |\xi|) \right)
\frac{
\xi_{N} (\xi \cdot M_{0})
}{
|\xi|^{2}
}, \\
\hat{U}_{N,s}(t,\xi)& := 
\frac{\sin(t \alpha_{2} |\xi|)}{\alpha_{2}|\xi|} m_{1N} +\left(
\frac{\sin(t \alpha_{1} |\xi|)}{\alpha_{1}|\xi|} 
- \frac{\sin(t \alpha_{2} |\xi|)}{\alpha_{2}|\xi|} 
\right)
\frac{\xi_{N} (\xi \cdot M_{1}) }{|\xi|^{2}}, 
\end{split}
\end{equation*}
\begin{equation*} 
\begin{split}
\hat{R}_{N,c}(t,\xi) & :=\hat{V}_{N,c}(t,\xi)-\hat{U}_{N,c}(t,\xi), \\
\hat{R}_{N,s}(t,\xi) &:=\hat{V}_{N,s}(t,\xi)-\hat{U}_{N,s}(t,\xi)
\end{split}
\end{equation*}
for the proof of Theorem \ref{thm:1.1} and 
\begin{equation} \label{eq:2.3}
\begin{split}
\hat{u}_{N}(t,\xi) = \left( \hat{U}_{N,c}(t,\xi) +
\hat{\tilde{U}}_{N,c}(t,\xi) \right) + \left( \hat{U}_{N,s}(t,\xi) + \hat{\tilde{U}}_{N,s}(t,\xi) 
\right)
+\hat{\tilde{R}}_{N,c}(t,\xi)  
+\hat{\tilde{R}}_{N,s}(t,\xi) ,
\end{split}
\end{equation}
where
\begin{equation*} 
\begin{split}
\hat{\tilde{U}}_{N,c}(t,\xi) &:= 
\cos(t \alpha_{2} |\xi|) (i \xi \cdot P_{0N}) 
+\left( \cos(t \alpha_{1} |\xi|) - \cos(t \alpha_{2} |\xi|) \right)
i \sum_{k=1}^{n} \left( \xi \cdot P_{0k} \right) \frac{\xi_{N} \xi_{k}}{|\xi|^{2}} \\
\hat{\tilde{U}}_{N, s}(t,\xi)& := 
\frac{\sin(t \alpha_{2} |\xi|)}{\alpha_{2}|\xi|} (i \xi \cdot P_{1N}) +\left(
\frac{\sin(t \alpha_{1} |\xi|)}{\alpha_{1}|\xi|} 
- \frac{\sin(t \alpha_{2} |\xi|)}{\alpha_{2}|\xi|} 
\right)
i \sum_{k=1}^{n} \left( \xi \cdot P_{1k} \right) \frac{\xi_{N} \xi_{k}}{|\xi|^{2}},
\end{split}
\end{equation*}
\begin{equation*} 
\begin{split}
\hat{\tilde{R}}_{N,c}(t,\xi) & :=\hat{R}_{N,c}(t,\xi)-\hat{\tilde{U}}_{N,c}(t,\xi) , \\
\hat{\tilde{R}}_{N,s}(t,\xi) & :=\hat{R}_{N,s}(t,\xi)-\hat{\tilde{U}}_{N,s}(t,\xi) 
\end{split}
\end{equation*}
for the proof of Theorem \ref{thm:1.4}.

We also introduce the auxiliary functions defined by 
\begin{equation*}
\begin{split}
W^{(\alpha, \beta)}_{c}(t,x) & := \mathcal{F}^{-1} 
\left[ 
e^{-\beta |\xi|^{2}} \cos(t \alpha |\xi|)
\right], \\
W^{(\alpha, \beta)}_{s}(t,x) & := \mathcal{F}^{-1} 
\left[ 
e^{-\beta |\xi|^{2}}
\frac{\sin(t \alpha |\xi|)}{\alpha |\xi|}
\right], \\
\tilde{W}^{(\alpha, \beta)}_{c,(N,k)}(t,x) & := \mathcal{F}^{-1} 
\left[ 
e^{-\beta |\xi|^{2}} \cos(t\alpha |\xi|) \frac{\xi_{N} \xi_{k}}{|\xi|^{2}}
\right], \\
\tilde{W}^{(\alpha, \beta)}_{s, (N,k)}(t,x) & := \mathcal{F}^{-1} 
\left[ 
e^{-\beta |\xi|^{2}}
\frac{\sin(t \alpha |\xi|)}{\alpha |\xi|}\frac{\xi_{N} \xi_{k}}{|\xi|^{2}}
\right],
\end{split}
\end{equation*}
where $\alpha>0$, $\beta>0$ and $N, k=1, 2, \cdots, n$.
\subsection{Useful formulas}
In this subsection we collect the well-known facts which are useful to prove the main results.
At first, we recall the properties of the spherical integrals.
\begin{lem} \label{Lem:2.1}
The following statements hold. 
\begin{align} \label{eq:2.4}
& \int_{\mathbb{S}^{n-1}} \omega^{\gamma} d \omega = 0, 
 \quad \gamma \in \mathbb{Z}_{+}^{n}, \ |\gamma|\ \text{is}\ \text{odd}, \\
& \int_{\mathbb{S}^{n-1}} d \omega = |\mathbb{S}^{n-1}|
= \dfrac{2 \pi^{\frac{n}{2}}}{\Gamma(\frac{n}{2})}, \label{eq:2.5}  \\
& \int_{\mathbb{S}^{n-1}}  \omega_{j}^{2} d \omega 
= \dfrac{1}{n} |\mathbb{S}^{n-1}|, \label{eq:2.6} \\
& \int_{\mathbb{S}^{n-1}}  \omega_{j}^{2} \omega_{k}^{2} d \omega 
= 
\begin{cases}
& \dfrac{3}{n(n+2)} |\mathbb{S}^{n-1}|, \quad j=k, \\
\ \\
& \dfrac{1}{n(n+2)} |\mathbb{S}^{n-1}|, \quad j \neq k
\end{cases} \label{eq:2.7}
\end{align}
and
\begin{align} 
& \int_{\mathbb{S}^{n-1}}  \omega_{j}^{2} \omega_{k}^{2} \omega_{\ell}^{2} d \omega 
= 
\begin{cases}
& \dfrac{15}{n(n+2)(n+4)} |\mathbb{S}^{n-1}|, \quad j=k=\ell, \\
\ \\
& \dfrac{3}{n(n+2)(n+4)} |\mathbb{S}^{n-1}|, \quad j = k, \, k \neq \ell, \\
\ \\
& \dfrac{1}{n(n+2)(n+4)} |\mathbb{S}^{n-1}|, \quad j \neq k, \, k \neq \ell, \, \ell \neq j
\end{cases} \label{eq:2.8}
\end{align}
for $j, k, \ell=1,2, \cdots, n$,
where $|\mathbb{S}^{n-1}|$ is the area of $\mathbb{S}^{n-1}$ in $\mathbb{R}^{n}$ and 
$$\Gamma(s) := \int_{0}^{\infty} e^{-x} x^{s-1} dx.$$
\end{lem}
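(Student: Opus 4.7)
The plan rests on three elementary tools: sign-flip symmetry of $\mathbb{S}^{n-1}$ under $\omega_j \mapsto -\omega_j$, permutation symmetry of the coordinate indices, and polar-coordinate reduction of Gaussian integrals over $\mathbb{R}^n$.

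For \eqref{eq:2.4} I would fix an index $j$ with $\gamma_j$ odd (such an index must exist since $|\gamma|$ is odd); the reflection $\omega \mapsto \omega - 2\omega_j \mathbf{e}_j$ is an isometry of $\mathbb{S}^{n-1}$ preserving surface measure, and it reverses the sign of the monomial $\omega^{\gamma}$, so the integral equals its own negative and must vanish. The area identity \eqref{eq:2.5} follows by evaluating the Gaussian $\int_{\mathbb{R}^n} e^{-|x|^2}dx = \pi^{n/2}$ both as a product of one-dimensional Gaussians and, after passing to polar coordinates, as $\tfrac{1}{2}|\mathbb{S}^{n-1}|\,\Gamma(n/2)$, which can be solved for $|\mathbb{S}^{n-1}|$.

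For the even-moment identities \eqref{eq:2.6}--\eqref{eq:2.8}, I would compute $I_{\alpha} := \int_{\mathbb{R}^n} x^{2\alpha} e^{-|x|^2}dx$ for each relevant multi-index $\alpha$ in two ways. Fubini and the one-dimensional moment formula $\int_{\mathbb{R}} y^{2k}e^{-y^2}dy = \tfrac{(2k)!}{4^k k!}\sqrt{\pi}$ give $I_\alpha = \pi^{n/2} \prod_{j=1}^n \tfrac{(2\alpha_j)!}{4^{\alpha_j}\alpha_j!}$, while the decoupling of radial and angular parts in polar coordinates yields $I_\alpha = \tfrac{1}{2}\Gamma\!\bigl(\tfrac{n}{2}+|\alpha|\bigr) \int_{\mathbb{S}^{n-1}} \omega^{2\alpha}\,d\omega$. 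Equating the two expressions and applying the recursion $\Gamma(s+1)=s\,\Gamma(s)$ together with \eqref{eq:2.5} produces each stated closed form: the sub-cases of \eqref{eq:2.7} correspond to the index-multiplicity patterns $(4)$ and $(2,2)$, while the four sub-cases of \eqref{eq:2.8} correspond to $(6)$, $(4,2)$, and $(2,2,2)$, each producing a different product of one-dimensional moments.

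No step is genuinely hard; the only obstacle I anticipate is organizational, namely simplifying the Gamma-function ratios cleanly and matching them to the three and four respective cases. As an internal consistency check for \eqref{eq:2.7} one verifies that $\tfrac{3}{n(n+2)} + (n-1)\cdot\tfrac{1}{n(n+2)} = \tfrac{1}{n}$, which matches the identity $\sum_{j=1}^n \omega_j^2 \equiv 1$ combined with \eqref{eq:2.6}; an analogous sum rule $2\cdot\tfrac{3}{n(n+2)(n+4)} + (n-2)\cdot\tfrac{1}{n(n+2)(n+4)} = \tfrac{1}{n(n+2)}$ holds for \eqref{eq:2.8}.
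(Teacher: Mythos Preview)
Your proposal is correct and complete; the Gaussian-moment-in-two-ways technique is the standard route to these spherical monomial integrals, and your consistency checks are a nice safeguard. The paper itself offers no proof at all for this lemma---it simply declares the result ``straightforward'' and omits the details---so there is nothing to compare against. One trivial slip: you refer twice to ``four sub-cases'' of \eqref{eq:2.8}, but there are only three (patterns $(6)$, $(4,2)$, $(2,2,2)$), as your own enumeration shows.
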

The proof of Lemma \ref{Lem:2.1} is straightforward and we omit the detail.
\begin{lem} \label{Lem:2.2}
Let $n \ge 2$ be an integer.
Then it follows that
\begin{align} \label{eq:2.9}
\int_{\mathbb{S}^{n-1}} \omega_{k} (\omega \cdot M_{j}) d \omega 
=\frac{|\mathbb{S}^{n-1}|}{n} m_{jk}
=\dfrac{2 \pi^{\frac{n}{2}}}{n \Gamma(\frac{n}{2})} m_{jk}
\end{align}
and 
\begin{equation} \label{eq:2.10}
\begin{split}
\int_{\mathbb{S}^{n-1}}  (\omega \cdot M_{j})^{2}\omega_{k}^{2} d \omega
=
\frac{|\mathbb{S}^{n-1}|}{n(n+2)} \left( |M_{j}|^{2} +2 m_{jk}^{2} \right)
=
\frac{\pi^{\frac{n}{2}}}{\Gamma(\frac{n+4}{2})} \left( \frac{|M_{j}|^{2}}{2} +m_{jk}^{2} \right)
\end{split}
\end{equation}
for $j=0,1$ and $k=1,2, \cdots, n$.
\end{lem}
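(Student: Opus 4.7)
The plan is to expand each integrand in components of $M_{j}$ and reduce everything to the spherical moments already tabulated in Lemma~\ref{Lem:2.1}. Writing $\omega \cdot M_{j} = \sum_{a=1}^{n} m_{ja} \omega_{a}$, each identity becomes a finite linear combination of integrals of monomials in $\omega_{1}, \ldots, \omega_{n}$; the parity rule \eqref{eq:2.4} kills every term in which any coordinate appears to an odd total power, so only a diagonal subset of the multi-index sum survives.

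For \eqref{eq:2.9}, the expansion reads
\[
\int_{\mathbb{S}^{n-1}} \omega_{k}(\omega \cdot M_{j})\, d\omega
= \sum_{a=1}^{n} m_{ja} \int_{\mathbb{S}^{n-1}} \omega_{a}\omega_{k}\, d\omega,
\]
and only the term $a = k$ contributes by \eqref{eq:2.4}. Inserting the value from \eqref{eq:2.6} gives $\frac{|\mathbb{S}^{n-1}|}{n} m_{jk}$, and \eqref{eq:2.5} rewrites this in the Gamma-function form.

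For \eqref{eq:2.10}, expanding $(\omega \cdot M_{j})^{2} = \sum_{a,b=1}^{n} m_{ja} m_{jb} \omega_{a} \omega_{b}$ produces a double sum of integrals of $\omega_{a}\omega_{b}\omega_{k}^{2}$. A parity check shows that unless $a=b$ one of the coordinates appears with odd total degree (the three cases $\{a,b\}\cap\{k\}=\emptyset$, $a=k\neq b$, and $b=k\neq a$ are all odd somewhere), so by \eqref{eq:2.4} the sum collapses to
\[
\sum_{a=1}^{n} m_{ja}^{2} \int_{\mathbb{S}^{n-1}} \omega_{a}^{2}\omega_{k}^{2}\, d\omega.
\]
Splitting off $a=k$ and using \eqref{eq:2.7} gives $\frac{3|\mathbb{S}^{n-1}|}{n(n+2)} m_{jk}^{2}$ for that term and $\frac{|\mathbb{S}^{n-1}|}{n(n+2)} m_{ja}^{2}$ for each $a \neq k$. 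Summing yields $\frac{|\mathbb{S}^{n-1}|}{n(n+2)}\bigl(\sum_{a} m_{ja}^{2} + 2m_{jk}^{2}\bigr)$; identifying $\sum_{a} m_{ja}^{2}$ with $|M_{j}|^{2}$ and applying $\Gamma(\tfrac{n+4}{2}) = \tfrac{n(n+2)}{4}\Gamma(\tfrac{n}{2})$ together with \eqref{eq:2.5} delivers both equivalent forms.

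There is no real obstacle beyond combinatorial bookkeeping: the only delicate moment is recognising that the coefficient $3$ coming from the diagonal case of \eqref{eq:2.7} produces the extra $2m_{jk}^{2}$ (rather than $3m_{jk}^{2}$) once the diagonal term is combined with the $a \neq k$ contributions. No inequalities or limiting arguments are needed—the lemma is an exact identity obtained by two parity-plus-symmetry calculations.
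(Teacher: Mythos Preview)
Your computation is correct and is precisely the direct expansion-plus-parity argument one would carry out; the paper itself does not prove Lemma~\ref{Lem:2.2} but simply refers the reader to \cite{T1}, so there is no alternative approach to compare against.

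One point worth noting: the paper earlier defines $|M_{j}| := \sum_{k=1}^{n} |m_{jk}|$ (an $\ell^{1}$ norm), whereas your calculation produces $\sum_{a} m_{ja}^{2}$, which you identify with $|M_{j}|^{2}$. The identity \eqref{eq:2.10} is only correct under the Euclidean interpretation $|M_{j}|^{2} = \sum_{a} m_{ja}^{2}$, so this is a notational inconsistency in the paper rather than a gap in your argument; your reading is the one that makes the lemma true.
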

For the proof of Lemma \ref{Lem:2.2}, see e.g. \cite{T1}.

The following lemma is a direct consequence of the well-known fact, the Riemann-Lebesgue theorem (cf. \cite{Stein1}).
\begin{lem} \label{Lem:2.3}
Let $n \ge 1$ be an integer, $\gamma_{1}$, 
$\gamma_{2} \in \mathbb{R}$ with $\gamma_{1} \neq \gamma_{2}$ and $F \in L^{1}(\mathbb{R}^{n}; \mathbb{R})$. 
Then the following estimates hold;
\begin{equation} \label{eq:2.11}
\begin{split}
& \int_{\mathbb{R}^{n}} 
F(\xi) \cos(2 t \gamma_{1} |\xi|) d \xi=o(1), \\
&  \int_{\mathbb{R}^{n}} 
F(\xi) \sin(2 t \gamma_{1} |\xi|) d \xi=o(1), \\
&  \int_{\mathbb{R}^{n}} 
F(\xi) \cos(t \gamma_{1} r) \cos(t \gamma_{2} |\xi|) d \xi=o(1), \\
&  \int_{\mathbb{R}^{n}} 
F(\xi)\cos(t \gamma_{1} r) \sin(t \gamma_{2} |\xi|) d \xi=o(1), \\
&  \int_{\mathbb{R}^{n}} 
F(\xi) \sin(t \gamma_{1} r) \sin(t \gamma_{2} |\xi|) d \xi=o(1)
\end{split}
\end{equation}
as $t \to \infty$.
\end{lem}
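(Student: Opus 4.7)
The plan is to reduce every integral to a one-dimensional oscillatory integral in the radial variable $r=|\xi|$ and then invoke the classical Riemann--Lebesgue lemma on $\mathbb{R}$.

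First I treat the single-frequency estimates. Passing to polar coordinates $\xi=r\omega$ with $r=|\xi|$ and $\omega\in\mathbb{S}^{n-1}$, and defining
\begin{equation*}
G(r):=r^{n-1}\int_{\mathbb{S}^{n-1}}F(r\omega)\,d\omega\quad\text{for }r>0,\qquad G(r):=0\quad\text{for }r\le 0,
\end{equation*}
Fubini--Tonelli gives $G\in L^{1}(\mathbb{R})$ with $\|G\|_{L^{1}(\mathbb{R})}\le\|F\|_{L^{1}(\mathbb{R}^{n})}$. Hence, for any $\gamma_{1}\neq 0$,
\begin{equation*}
\int_{\mathbb{R}^{n}}F(\xi)\cos(2t\gamma_{1}|\xi|)\,d\xi=\int_{\mathbb{R}}G(r)\cos(2t\gamma_{1}r)\,dr=\operatorname{Re}\wh{G}(2t\gamma_{1})\longrightarrow 0
\end{equation*}
as $t\to\infty$ by the classical one-dimensional Riemann--Lebesgue lemma; the sine analogue follows identically by taking the imaginary part.

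For the three product estimates I would then apply the product-to-sum identities
\begin{align*}
\cos(t\gamma_{1}|\xi|)\cos(t\gamma_{2}|\xi|)&=\tfrac{1}{2}\bigl[\cos(t(\gamma_{1}-\gamma_{2})|\xi|)+\cos(t(\gamma_{1}+\gamma_{2})|\xi|)\bigr],\\
\cos(t\gamma_{1}|\xi|)\sin(t\gamma_{2}|\xi|)&=\tfrac{1}{2}\bigl[\sin(t(\gamma_{1}+\gamma_{2})|\xi|)-\sin(t(\gamma_{1}-\gamma_{2})|\xi|)\bigr],\\
\sin(t\gamma_{1}|\xi|)\sin(t\gamma_{2}|\xi|)&=\tfrac{1}{2}\bigl[\cos(t(\gamma_{1}-\gamma_{2})|\xi|)-\cos(t(\gamma_{1}+\gamma_{2})|\xi|)\bigr],
\end{align*}
each of which reduces the question to the single-frequency case at the two frequencies $\gamma_{1}\pm\gamma_{2}$. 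The hypothesis $\gamma_{1}\neq\gamma_{2}$ gives $\gamma_{1}-\gamma_{2}\neq 0$; in the intended application to the Lam\'e system one takes $\gamma_{1}=\alpha_{1}=\sqrt{\lambda+2\mu}>0$ and $\gamma_{2}=\alpha_{2}=\sqrt{\mu}>0$, so $\gamma_{1}+\gamma_{2}>0$ as well. Applying the first step term by term then yields the desired $o(1)$ bounds.

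I do not expect a real obstacle: the whole argument amounts to noting that $F\mapsto G$ is an $L^{1}$-contraction that converts radial oscillation on $\mathbb{R}^{n}$ into ordinary Fourier oscillation on $\mathbb{R}$. The only subtle point is that the product estimates implicitly require $\gamma_{1}+\gamma_{2}\neq 0$ in addition to $\gamma_{1}\neq\gamma_{2}$; this is automatic in the positive-frequency setting in which Lemma~\ref{Lem:2.3} will be used, so it is harmless for the present paper, but strictly speaking should be recorded as a hypothesis if one wants a statement valid for all real $\gamma_{1},\gamma_{2}$.
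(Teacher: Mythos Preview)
Your argument is correct and is exactly what the paper has in mind: the paper does not give a proof at all but simply states that Lemma~\ref{Lem:2.3} ``is a direct consequence of the well-known fact, the Riemann--Lebesgue theorem,'' so your polar-coordinate reduction plus product-to-sum identities is precisely the intended (and only natural) route, just written out in full. Your caveat that the statement tacitly requires $\gamma_{1}\neq 0$ for the first two lines and $\gamma_{1}+\gamma_{2}\neq 0$ for the last three is well taken; the paper applies the lemma only with $\gamma_{1}=\alpha_{1}>0$, $\gamma_{2}=\alpha_{2}>0$, so this is harmless there, but it is a genuine omission in the hypotheses as written.
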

%
\section{Estimates for $W^{(\alpha.\beta)}_{c}(t, x)$ and $W^{(\alpha.\beta)}_{s}(t, x)$}
In this section, 
we prove the approximation formulas of 
$W^{(\alpha, \beta)}_{c}(t) \ast f$ by $m_{f, \gamma} W^{(\alpha, \beta)}_{c}(t)$
and $W^{(\alpha, \beta)}_{s}(t) \ast f$ by $m_{f, \gamma} W^{(\alpha, \beta)}_{s}(t)$ as $t \to \infty$, 
for $n \ge 1$ and $\beta \gg 1$. 
Here we emphasize that thanks to the estimates \eqref{eq:3.7}, \eqref{eq:3.8}, \eqref{eq:3.10} and \eqref{eq:3.11} below, 
we can choose the initial data in Theorem \ref{thm:1.1} from $L^{2} \cap L^{1}$ instead of 
the weighted space $L^{2} \cap L^{1,1}$ for $n \ge 2$.
First,
we mention the case 
where the lower bounds of $W^{(\alpha, \beta)}_{c}(t)$ and $W^{(\alpha, \beta)}_{s}(t)$ 
are estimated by the constants independent of $t$, when $t \gg 1$.
%
\begin{lem} \label{Lem:3.1}
Let $n \in \mathbb{N}$, $k \in \mathbb{N}$, $\alpha>0$, 
$\beta>0$ and $\gamma \in \mathbb{Z}^{n}_{+}$ with $|\gamma| = k$. 
Then the following estimates hold.
\begin{equation} \label{eq:3.1}
\begin{split}
\| \partial_{x}^{\gamma} W^{(\alpha,\beta)}_{c} (t) \|_{L^{2}} 
= 2^{
-\frac{n}{4}-\frac{k}{2}-1
} 
\left( 
\int_{\mathbb{S}^{n-1} }
\omega^{2 \gamma} d \omega  \times
\Gamma 
\left(\frac{n}{2} + k \right)
\right)^{\frac{1}{2}} 
\beta^{-\frac{n}{4}-\frac{k}{2}} 
-o(1)
\end{split}
\end{equation}
for $(n,k) \in [1, \infty) \times [0,\infty)$ as $t \to \infty$
and 
\begin{equation} \label{eq:3.2}
\begin{split}
\| \partial_{x}^{\gamma} W^{(\alpha,\beta)}_{s} (t) \|_{L^{2}} 
= 2^{
-\frac{n}{4}-\frac{k}{2}-\frac{1}{2}
} 
\left( 
\int_{\mathbb{S}^{n-1} }
\omega^{2 \gamma} d \omega  \times
\Gamma 
\left(\frac{n}{2} + k-1 \right)
\right)^{\frac{1}{2}} 
\beta^{-\frac{n}{4}-\frac{k}{2}+\frac{1}{2}} 
-o(1)
\end{split}
\end{equation}
for $(n,k) \in [3, \infty) \times [0,\infty)$, $(n,k) \in \{ 2 \} \times [1,\infty)$ or 
$(n,k) \in \{ 1 \} \times [1,\infty)$,
as $t \to \infty$.
\end{lem}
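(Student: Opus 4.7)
The plan is to apply Plancherel's theorem to each norm, separate the oscillatory from the non-oscillatory contribution via the half-angle identities $\cos^{2}\theta=\tfrac{1}{2}(1+\cos 2\theta)$ and $\sin^{2}\theta=\tfrac{1}{2}(1-\cos 2\theta)$, dispose of the oscillatory part through Lemma \ref{Lem:2.3}, and evaluate the remaining non-oscillatory integral explicitly in polar coordinates.

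Starting with the cosine case, Plancherel gives
\begin{equation*}
\|\partial_{x}^{\gamma} W_{c}^{(\alpha,\beta)}(t)\|_{L^{2}}^{2} = c_{n}\int_{\mathbb{R}^{n}}\xi^{2\gamma} e^{-2\beta|\xi|^{2}}\cos^{2}(t\alpha|\xi|)\,d\xi,
\end{equation*}
where $c_{n}$ is the constant dictated by the Fourier convention in use. After splitting $\cos^{2}$ via the half-angle formula, the non-oscillatory half produces a $t$-independent integral, while the oscillatory half is $o(1)$ by Lemma \ref{Lem:2.3} since $\xi^{2\gamma}e^{-2\beta|\xi|^{2}}\in L^{1}(\mathbb{R}^{n})$. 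Passing to polar coordinates $\xi = r\omega$ separates the main term into
\begin{equation*}
\int_{0}^{\infty} r^{n+2k-1}e^{-2\beta r^{2}}\,dr\cdot\int_{\mathbb{S}^{n-1}}\omega^{2\gamma}\,d\omega,
\end{equation*}
and the substitution $s = 2\beta r^{2}$ converts the radial factor into $\tfrac{1}{2}(2\beta)^{-(n+2k)/2}\Gamma\bigl(\tfrac{n}{2}+k\bigr)$. Taking square roots via $\sqrt{A - o(1)} = \sqrt{A} - o(1)$ for $A > 0$ then delivers \eqref{eq:3.1}.

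The sine case follows the same scheme, but the Fourier integrand carries an additional factor $|\xi|^{-2}$, so its singularity at the origin is of order $|\xi|^{2k-2}$. Local integrability thus requires $n+2k-2 > 0$, which is precisely the restriction on $(n,k)$ stated in the lemma; the same condition guarantees that the oscillatory piece satisfies the hypotheses of Lemma \ref{Lem:2.3}. The radial integral now evaluates to $\tfrac{1}{2}(2\beta)^{-(n+2k-2)/2}\Gamma\bigl(\tfrac{n}{2}+k-1\bigr)$ after the same substitution, and the $\alpha^{-2}$ arising from $(\sin(t\alpha|\xi|)/(\alpha|\xi|))^{2}$ is absorbed into the prefactor of \eqref{eq:3.2} together with $c_{n}$.

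The main obstacle is essentially bookkeeping: carefully tracking the Fourier-normalization constant to match the stated prefactors, and delineating the $(n,k)$ range over which both the main-term radial integral and the Riemann--Lebesgue step are legitimate in the sine case. The structural content, namely Plancherel combined with the half-angle identity, Lemma \ref{Lem:2.3}, and a Gaussian-type radial integral reducible to $\Gamma$, is otherwise routine.
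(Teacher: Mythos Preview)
Your approach is essentially identical to the paper's: Plancherel, the half-angle identity, Lemma~\ref{Lem:2.3} for the oscillatory remainder, polar coordinates, and the Gaussian radial integral evaluated via~\eqref{eq:3.3}. The paper additionally records that $\int_{\mathbb{S}^{n-1}}\omega^{2\gamma}\,d\omega>0$ (needed for the square-root step), and your remark on the integrability constraint $n+2k-2>0$ in the sine case makes explicit what the paper leaves implicit; note however that under the paper's unitary Fourier convention one has $c_{n}=1$, so the factor $\alpha^{-1}$ cannot be absorbed as you suggest---it is simply missing from the stated constant in~\eqref{eq:3.2}, which does not affect any later use of the lemma.
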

\begin{proof}
%
We begin with the proof of \eqref{eq:3.1}.
Firstly we observe that 
$$
\int_{\mathbb{S}^{n-1} }
\omega^{2 \gamma} d \omega > 0
$$
for $\gamma \in \mathbb{Z}_{+}^{n}$ since the function $g(\omega) :=\omega^{2 \gamma}$ defined on $\mathbb{S}^{n-1}$ continuous and attains minimum of positive value.
See \cite{F} for more details on this discussion.
Noting that $\cos^{2}(z) = \dfrac{1+\cos(2z)}{2}$
and 
\begin{equation} \label{eq:3.3}
\begin{split}
\int_{0}^{\infty} e^{-2 \beta r^{2}} r^{k-1} dr =
2^{-\frac{k}{2}-1} \Gamma\left( \frac{k}{2} \right) \beta^{-\frac{k}{2}}
\end{split}
\end{equation}
for $k>0$,
we have  
\begin{equation*}
\begin{split}
\| \partial_{x}^{\gamma} W^{(\alpha, \beta)}_{c} (t) \|_{L^{2}}^{2} &= 
\int_{0}^{\infty} e^{-2 \beta r^{2}} \cos^{2} (t \alpha r) r^{n+ 2 |\gamma|-1} dr 
\int_{\mathbb{S}^{n-1} }
\omega^{2 \gamma} d \omega
\\
&= \frac{1}{2}
\left( 
\int_{0}^{\infty} e^{-2 \beta r^{2}} r^{n+2 |\gamma|-1} dr
+o(1)
\right) \int_{\mathbb{S}^{n-1} }
\omega^{2 \gamma} d \omega \\
& = 2^{-\frac{n}{2}-|\gamma|-2} 
\Gamma\left( \frac{n}{2} +|\gamma| \right) \beta^{-\frac{n}{2}-|\gamma|} 
\int_{\mathbb{S}^{n-1} }
\omega^{2 \gamma} d \omega
+o(1)
\end{split}
\end{equation*}
as $t \to \infty$,
by \eqref{eq:2.11}.
Thus we obtain \eqref{eq:3.1}.
We apply same argument to have \eqref{eq:3.2}.
We complete the proof of Lemma \ref{Lem:3.1}.
\end{proof}
Next, we discuss the case where $W^{(\alpha, \beta)}_{c}(t)$ and $W^{(\alpha, \beta)}_{s}(t)$ grow with respect to $t$. 
\begin{lem} \label{Lem:3.2}
Let $\alpha>0$, $\beta>0$ and $t \gg 1$. \\
{\rm(i)}\ When $n=1$, there exist constants $C_{1}>0$ and $C_{2}>0$ such that 
\begin{equation} \label{eq:3.4}
\begin{split}
C_{1} e^{-\beta} t^{\frac{1}{2}} \le \| W^{(\alpha,\beta)}_{s} (t) \|_{L^{2}} \le C_{2} t^{\frac{1}{2}}
\end{split}
\end{equation}
for $t >\dfrac{4 \pi}{3 \alpha}$. \\
\noindent
{\rm (ii)} When $n=2$, 
there exist constants $C_{3}>0$ and $C_{4}>0$ such that 
\begin{equation} \label{eq:3.5}
\begin{split}
C_{3} e^{-\beta} \sqrt{\log(t+2)} \le \| W^{(\alpha,\beta)}_{s} (t) \|_{L^{2}} 
\le C_{4} \sqrt{\log(t+2)}
\end{split}
\end{equation}
for $t \ge 2$.
Here $C_{j}$ is independent of $\beta$ for $j=1,2, 3,4$.
\end{lem}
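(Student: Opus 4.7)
The plan is to reduce everything to a one-dimensional radial integral via Plancherel's theorem and then extract the precise growth rate by a careful splitting. Starting from
\begin{equation*}
\|W^{(\alpha,\beta)}_{s}(t)\|_{L^{2}}^{2} = (2\pi)^{-n}\int_{\mathbb{R}^{n}} e^{-2\beta|\xi|^{2}}\,\frac{\sin^{2}(t\alpha|\xi|)}{\alpha^{2}|\xi|^{2}}\,d\xi,
\end{equation*}
I would pass to polar coordinates, which gives (up to a constant depending only on $n$ and $\alpha$) the radial integrals
\begin{equation*}
I_{1}(t) = \int_{0}^{\infty} e^{-2\beta r^{2}}\frac{\sin^{2}(t\alpha r)}{r^{2}}\,dr \quad (n=1), \qquad I_{2}(t) = \int_{0}^{\infty} e^{-2\beta r^{2}}\frac{\sin^{2}(t\alpha r)}{r}\,dr \quad (n=2).
\end{equation*}
All bounds then reduce to asymptotic estimates of $I_{1}(t)$ and $I_{2}(t)$ as $t \to \infty$.

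For the \emph{upper bounds}, I would split the integration interval into the three ranges $\{0 < r \le 1/(t\alpha)\}$, $\{1/(t\alpha) \le r \le 1\}$, and $\{r \ge 1\}$. On the first range use $\sin^{2}(t\alpha r) \le t^{2}\alpha^{2} r^{2}$, which contributes $O(t)$ for $I_{1}$ and $O(1)$ for $I_{2}$. On the second range use $\sin^{2}(t\alpha r)\le 1$, which integrates to $O(t)$ and $O(\log t)$ respectively. On the third range the Gaussian gives exponential decay. This produces $\|W^{(\alpha,\beta)}_{s}(t)\|_{L^{2}}^{2} \le C t$ for $n=1$ and $\le C\log(t+2)$ for $n=2$, with $C$ uniform in $\beta$ after, if needed, absorbing the small-$\beta$ contribution coming from the tail into the logarithm via the change of variable $s = r\sqrt{2\beta}$.

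For the \emph{lower bounds}, the point is to restrict integration to $\{0 < r \le 1\}$, where $e^{-2\beta r^{2}} \ge e^{-2\beta}$, so that the Gaussian can be pulled out as the factor $e^{-2\beta}$ promised in \eqref{eq:3.4} and \eqref{eq:3.5}. After this, the substitution $s = t\alpha r$ converts what remains into universal integrals of $\sin^{2}s/s^{2}$ and $\sin^{2}s/s$, namely
\begin{equation*}
I_{1}(t) \ge e^{-2\beta}\, t\alpha \int_{0}^{t\alpha}\frac{\sin^{2}s}{s^{2}}\,ds, \qquad I_{2}(t) \ge \frac{e^{-2\beta}}{t\alpha}\int_{\alpha}^{t\alpha}\frac{\sin^{2}s}{s}\,ds \cdot t\alpha.
\end{equation*}
(Here for $n=2$ I restrict further to $r \in [1/(t\alpha),1]$.) The first gives a bound of order $e^{-2\beta}t$ provided $t\alpha$ is bounded below by some absolute constant, which explains the hypothesis $t > 4\pi/(3\alpha)$ ensuring the truncated integral $\int_{0}^{t\alpha}\sin^{2}s/s^{2}\,ds$ exceeds a definite positive value. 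The second gives $\frac{1}{2}\log t + O(1)$ upon writing $\sin^{2}s = (1-\cos 2s)/2$ and noting that $\int_{\alpha}^{t\alpha}\frac{1}{s}\,ds = \log t$ while the oscillatory part $\int_{\alpha}^{t\alpha}\frac{\cos 2s}{s}\,ds$ is uniformly bounded (integration by parts).

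The main obstacle is the logarithmic lower bound in the two-dimensional case: one has to ensure that the $\log t$ growth from the $1/r$ factor actually survives the $\sin^{2}(t\alpha r)$ oscillation and is not cancelled, and to do this cleanly by an explicit change of variable so that the resulting constant is independent of $\beta$ (only the explicit prefactor $e^{-2\beta}$ remains). The upper bound uniformity in $\beta$ requires the same rescaling trick to keep the two-dimensional logarithmic integral from producing a spurious $\log(1/\beta)$ contribution when $\beta$ is small.
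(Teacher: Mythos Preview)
Your approach matches the paper's closely. The upper bounds use the same three-zone splitting (the paper cuts at $|\xi|=t^{-1/n}$ rather than $1/(t\alpha)$, but this is cosmetic), and the $n=2$ lower bound is handled identically: restrict to $r\le 1$ to extract $e^{-2\beta}$, write $\sin^{2}z=(1-\cos 2z)/2$, and dispose of the oscillatory term $\int_{t^{-1/2}}^{1}\cos(2t\alpha r)\,r^{-1}\,dr$ by one integration by parts. The only genuine difference is the $n=1$ lower bound: you substitute $s=t\alpha r$ and invoke the truncated convergent integral $\int_{0}^{t\alpha}\sin^{2}s/s^{2}\,ds$, whereas the paper simply restricts to the single interval $r\in[\tfrac{\pi}{4\alpha t},\tfrac{3\pi}{4\alpha t}]$ on which $\sin^{2}(t\alpha r)\ge\tfrac12$ and integrates $r^{-2}$ explicitly to obtain $\tfrac{4e^{-2\beta}}{\pi\alpha}\,t$. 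Both are elementary; yours is more systematic, the paper's a line shorter. Your worry about $\beta$-uniformity of the upper bound is legitimate---the paper does not address it, and its tail term $\int_{|\xi|\ge1}e^{-2\beta|\xi|^{2}}\,d\xi$ is indeed unbounded as $\beta\to0$---but this does not matter in the applications, where $\beta$ is always fixed or large.
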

\begin{proof}
At first, we prove the upper bound estimates in \eqref{eq:3.4} and \eqref{eq:3.5}. 
Using the fact that $|\frac{\sin(t |\xi|)}{|\xi|}| \le t$ in $\{ |\xi| \le t^{-\frac{1}{n}} \}$,
we get  
\begin{equation} \label{eq:3.6} 
\begin{split}
\| W^{(\alpha, \beta)}_{s} (t) \|_{L^{2}}^{2}  
& \le C t^{3-n} \int_{|\xi| \le t^{-\frac{1}{n}}} d \xi 
+ C \int_{t^{-\frac{1}{n}} \le |\xi| \le 1} |\xi|^{-2} d \xi 
+ C \int_{1\le |\xi| } e^{-2 \beta |\xi|^{2}} d \xi 
\\
&= C +C \int_{t^{-\frac{1}{n}}} r^{n-3} dr 
\le \begin{cases}
& C t, \quad n=1, \\
& C \log(t+2), \quad n=2.
\end{cases}
\end{split}
\end{equation}
For the lower bound estimate in \eqref{eq:3.4}, 
since $\sin^{2}( t \alpha r) \ge \frac{1}{2}$ for 
$r \in [\frac{\pi}{4 \alpha}t^{-1}, \frac{3\pi}{4 \alpha}t^{-1}]$, 
we see that 
\begin{equation} \label{eq:3.7}
\begin{split}
\| W^{(\alpha, \beta)}_{s} (t) \|_{L^{2}}^{2} & \ge \frac{1}{\alpha^{2}}
\int_{\frac{\pi}{4 \alpha}t^{-1}}^{1} e^{-2 \beta r^{2}} \sin^{2} (t \alpha r) r^{-2} dr \\
&= \frac{e^{-2 \beta}}{2 \alpha^{2}}
\int_{\frac{\pi}{4 \alpha}t^{-1}}^{\frac{3 \pi}{4 \alpha}t^{-1}} r^{-2} dr 
= \frac{4 e^{-2 \beta}}{\pi \alpha} t.
\end{split}
\end{equation}
Combining the estimates \eqref{eq:3.6} and \eqref{eq:3.7}, 
we have the desired estimate \eqref{eq:3.4}.
For $n=2$, 
noting that
\begin{equation*}
\begin{split}
\int_{t^{-\frac{1}{2}}}^{1} 
\cos(2 t \alpha r) 
r^{-1} dr = \left[ \frac{\sin(2 t \alpha r) }{2 t \alpha r} \right]_{r=t^{-\frac{1}{2}}}^{1} +
 \int_{t^{-\frac{1}{2}}}^{1} 
\frac{\sin(2 t\alpha r) }{2 t \alpha}
r^{-2} dr,
\end{split}
\end{equation*}
by the integral by parts, 
and then, we obtain 
\begin{equation} \label{eq:3.8}
\begin{split}
\left| \int_{t^{-\frac{1}{2}}}^{1} 
\cos(2 t \alpha r) 
r^{-1} dr \right| \le Ct^{-\frac{1}{2}} +C.
\end{split}
\end{equation}
We apply the fact that 
$\sin^{2} z= \frac{1-\cos (2z)}{2}$
to have
\begin{equation} \label{eq:3.9}
\begin{split}
\| W^{(\alpha, \beta)}_{s} (t) \|_{L^{2}}^{2} & \ge \frac{|\mathbb{S}^{1}|}{\alpha^{2}}
\int_{t^{-\frac{1}{2}}}^{1} e^{-2 \beta r^{2}} \sin^{2} (t \alpha r) r^{-1} dr \\
& \ge \frac{e^{-2 \beta}}{2 \alpha^{2}}
\int_{t^{-\frac{1}{2}}}^{1} \frac{1-\cos( 2 t \alpha r)  }{r} dr \\
& \ge C e^{-2 \beta} (\log(t+2) - t^{-\frac{1}{2}} -C).
\end{split}
\end{equation}
Therefore we obtain the desired estimate \eqref{eq:3.5} by \eqref{eq:3.6} and \eqref{eq:3.9}. 
We complete the proof of Lemma \ref{Lem:3.2}.
\end{proof}
The following proposition states the asymptotic behavior of $W^{(\alpha, \beta)}_{c}(t) \ast f$ 
and $W^{(\alpha, \beta)}_{s}(t) \ast f$ as $t \to \infty$, 
which plays essential role to show Theorems \ref{thm:1.1} and \ref{thm:1.4}. 
\begin{prop} \label{prop:3.3}
Let $n \in \mathbb{N}$ and $k \in \mathbb{N}$.
Suppose that $f \in L^{1,k}(\mathbb{R}^{n}; \mathbb{R})$.

\noindent
{\rm (i)}\ Under the assumption on Lemma \ref{Lem:3.1},
it holds that 
\begin{equation} \label{eq:3.10}
\begin{split}
& \left\| W^{(\alpha,\beta)}_{c} (t) \ast f - 
\sum_{  |\gamma| \le k} m_{f, \gamma} 
\partial_{x}^{\gamma} W^{(\alpha, \beta)}_{c} (t) \right\|_{L^{2}} 
+\left\| \tilde{W}^{(\alpha,\beta)}_{c} (t) \ast f - 
\sum_{  |\gamma| \le k} m_{f, \gamma} 
\partial_{x}^{\gamma} \tilde{W}^{(\alpha, \beta)}_{c} (t) \right\|_{L^{2}}
\\
& \le C(\beta^{-\frac{n-1}{4}-\frac{k+1}{2}} + \beta^{\frac{1}{4}} o(1)) \| f \|_{L^{1,k}}
+ C (\beta^{-\frac{n}{4}-\frac{k}{2}} +o(1))
\int_{|y| \ge \beta^{\frac{1}{4}} } 
|y|^{k} |f(y)| dy
\end{split}
\end{equation}
and 
\begin{equation} \label{eq:3.11}
\begin{split}
& \left\| W^{(\alpha,\beta)}_{s} (t) \ast f - 
\sum_{  |\gamma| \le k} m_{f, \gamma} 
\partial_{x}^{\gamma} W^{(\alpha, \beta)}_{s} (t) \right\|_{L^{2}} 
+\left\| \tilde{W}^{(\alpha,\beta)}_{s} (t) \ast f - 
\sum_{  |\gamma| \le k} m_{f, \gamma} 
\partial_{x}^{\gamma} \tilde{W}^{(\alpha, \beta)}_{s} (t) \right\|_{L^{2}}
\\
& \le C(\beta^{-\frac{n-3}{4}-\frac{k+1}{2}} + \beta^{\frac{1}{4}} o(1)) \| f \|_{L^{1,k}}
+ C (\beta^{-\frac{n-2}{4}-\frac{k}{2}} +o(1))
\int_{|y| \ge \beta^{\frac{1}{4}} } 
|y|^{k} |f(y)| dy
\end{split}
\end{equation}
as $t \to \infty$.\\
\noindent
{\rm (ii)}\ 
Under the assumption on Lemma \ref{Lem:3.2}, 
it holds that 
\begin{equation} \label{eq:3.12}
\begin{split}
\| W^{(\alpha, \beta)}_{s}(t) \ast f - m_{f,0} W^{(\beta, t)}_{s}(t) \|_{L^{2}} =o(t^{\frac{1}{2}}),
\end{split}
\end{equation}
as $t \to \infty$ for $n=1$ and 
\begin{equation} \label{eq:3.13}
\begin{split}
\| W^{(\alpha, \beta)}_{s}(t) \ast f - m_{f,0} W^{(\beta, t)}_{s}(t) \|_{L^{2}} =o(\sqrt{\log(t+2)}),
\end{split}
\end{equation}
\begin{equation} \label{eq:3.14}
\begin{split}
\| \tilde{W}^{(\alpha, \beta)}_{s, (N,k)}(t) \ast f - m_{f,0} \tilde{W}^{(\alpha, \beta)}_{s, (N,k)}(t) \|_{L^{2}} =o(\sqrt{\log(t+2)})
\end{split}
\end{equation}
as $t \to \infty$ for $n=2$.
\end{prop}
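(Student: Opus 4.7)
The plan is to pass to Fourier variables: since $\widehat{\partial_x^\gamma W^{(\alpha,\beta)}_c}(t,\xi)=(i\xi)^\gamma e^{-\beta|\xi|^2}\cos(t\alpha|\xi|)$, the Plancherel identity reduces the squared left-hand side of \eqref{eq:3.10} to a constant multiple of $\int_{\mathbb{R}^n}e^{-2\beta|\xi|^2}\cos^2(t\alpha|\xi|)|G(\xi)|^2\,d\xi$, where $G(\xi):=\hat f(\xi)-\sum_{|\gamma|\le k}(i\xi)^\gamma m_{f,\gamma}$. A direct computation using $\partial^\gamma\hat f(0)=i^{|\gamma|}\gamma!\,m_{f,\gamma}$ shows that the subtracted sum is exactly the degree-$k$ Taylor polynomial of $\hat f$ at the origin, so $G$ is its Taylor remainder. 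Writing $\cos^2=\tfrac12(1+\cos(2t\alpha|\xi|))$ splits this integral into a stationary piece and an oscillatory piece; by Lemma \ref{Lem:2.3} the oscillatory piece tends to $0$ as $t\to\infty$, which produces the $o(1)$ summands in \eqref{eq:3.10}--\eqref{eq:3.11}. Since $|\xi_N\xi_k|/|\xi|^2\le1$, the tilde kernels $\tilde W^{(\alpha,\beta)}_{c,(N,k)}$ give pointwise-dominated integrands and all estimates transfer verbatim.

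\textbf{Bounding the stationary piece in (i).} I would split $f=f_1+f_2$ at the scale $R:=\beta^{1/4}$, with $f_1=f\chi_{\{|y|\le R\}}$, inducing $G=G_1+G_2$. On $\mathrm{supp}\,f_1$, the order-$(k{+}1)$ Taylor remainder of $e^{-iy\cdot\xi}$ gives $|G_1(\xi)|\le C|\xi|^{k+1}\int_{|y|\le R}|y|^{k+1}|f|\,dy\le CR|\xi|^{k+1}\|f\|_{L^{1,k}}$, using $|y|^{k+1}\le R|y|^k$ on the support. On $\mathrm{supp}\,f_2$ the order-$k$ estimate $|e^{-iy\cdot\xi}-\sum_{j=0}^k(-iy\cdot\xi)^j/j!|\le 2|y\cdot\xi|^k/k!$, which needs only $L^{1,k}$-integrability of $f$, gives $|G_2(\xi)|\le C|\xi|^k I_R$, with $I_R:=\int_{|y|>R}|y|^k|f|\,dy$. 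Inserting these pointwise bounds into the Gaussian-moment identity $\int_{\mathbb{R}^n}e^{-2\beta|\xi|^2}|\xi|^{2m}\,d\xi=C\beta^{-n/2-m}$ delivers the squared contributions $CR^2\beta^{-n/2-k-1}\|f\|_{L^{1,k}}^2=C\beta^{-n/2-k-1/2}\|f\|_{L^{1,k}}^2$ from $G_1$ and $C\beta^{-n/2-k}I_R^2$ from $G_2$; square-rooting and applying $\sqrt{a+b}\le\sqrt a+\sqrt b$ yields the two main exponents $\beta^{-(n-1)/4-(k+1)/2}$ and $\beta^{-n/4-k/2}$ of \eqref{eq:3.10}. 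Running the same pointwise bounds through the oscillatory piece via Lemma \ref{Lem:2.3} produces the $\beta^{1/4}o(1)\|f\|_{L^{1,k}}$ and $o(1)I_R$ corrections. The sine kernels lose two powers of $|\xi|$ in their Plancherel integrand relative to the cosine case, shifting the exponents to $-(n-3)/4-(k+1)/2$ and $-(n-2)/4-k/2$ as in \eqref{eq:3.11}; the conditions $(n,k)\in\{1,2\}\times[1,\infty)$ from Lemma \ref{Lem:3.1} are precisely those making the resulting weight $|\xi|^{2k-2}$ integrable near the origin.

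\textbf{Part (ii).} Since $k\ge 1$, one has $f\in L^{1,1}$ and the classical bound $|\hat f(\xi)-\hat f(0)|\le C|\xi|\|f\|_{L^{1,1}}$. The Fourier-side discrepancy for the $W_s$-estimate is $e^{-\beta|\xi|^2}\sin(t\alpha|\xi|)/(\alpha|\xi|)\cdot(\hat f(\xi)-\hat f(0))$, and Plancherel gives
\[
\|W^{(\alpha,\beta)}_s(t)\ast f-m_{f,0}W^{(\alpha,\beta)}_s(t)\|_{L^2}^2\le C\|f\|_{L^{1,1}}^2\int_{\mathbb{R}^n}e^{-2\beta|\xi|^2}\sin^2(t\alpha|\xi|)\,d\xi\le C\beta^{-n/2}\|f\|_{L^{1,1}}^2,
\]
uniformly in $t$. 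Hence the norm is $O(1)$, which is $o(t^{1/2})$ for $n=1$ (matching the $\sqrt t$ growth of $\|W^{(\alpha,\beta)}_s\|_{L^2}$ in Lemma \ref{Lem:3.2}) and $o(\sqrt{\log(t+2)})$ for $n=2$. The $\tilde W^{(\alpha,\beta)}_{s,(N,k)}$-case is identical via $|\xi_N\xi_k|/|\xi|^2\le 1$.

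\textbf{Main obstacle.} The critical calibration is the splitting scale $R=\beta^{1/4}$: it is precisely the value at which the Taylor-remainder contribution from $f_1$ (which grows with $R$) balances the tail-moment contribution from $f_2$ (which shrinks with $R$), so that both land on the natural growth scale $\beta^{-n/4-k/2}$ of $\|\partial_x^\gamma W^{(\alpha,\beta)}_c\|_{L^2}$ from Lemma \ref{Lem:3.1}. Any other choice would lose powers of $\beta$ on one side or the other. A secondary technical point is tracking the Riemann--Lebesgue $o(1)$ corrections termwise through the $G=G_1+G_2$ decomposition, so that they attach after square-rooting in the precise form $\beta^{1/4}o(1)\|f\|_{L^{1,k}}+o(1)I_R$ of \eqref{eq:3.10}--\eqref{eq:3.11}, rather than as one ambient $o(1)$.
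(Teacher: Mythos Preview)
Your approach is correct but genuinely different from the paper's. The paper works in physical space: it splits the convolution integral at $|y|=\beta^{1/4}$ and applies Taylor's theorem to the \emph{kernel} $W^{(\alpha,\beta)}_s(t,x-y)$, producing remainders of the form $\partial_x^\gamma W^{(\alpha,\beta)}_s(t,x-\theta y)$, whose $L^2$ norms are read off directly from the exact asymptotics of Lemma~\ref{Lem:3.1}; the factored $o(1)$ terms $\beta^{1/4}o(1)\|f\|_{L^{1,k}}$ and $o(1)\int_{|y|\ge\beta^{1/4}}|y|^k|f|$ fall out automatically because the $o(1)$ in \eqref{eq:3.1}--\eqref{eq:3.2} depends only on $\alpha,\beta$ and not on $f$. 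Your Fourier-side argument (Taylor on $e^{-iy\cdot\xi}$, then Gaussian moments) recovers the same $\beta$-exponents, but the Riemann--Lebesgue $o(1)$ you extract from the oscillatory half of $\cos^2$ depends implicitly on $|G_j|^2$ and hence on $f$; so what you actually obtain is $C\beta^{-(n-1)/4-(k+1)/2}\|f\|_{L^{1,k}}+C\beta^{-n/4-k/2}I_R+o(1)$ rather than the precise factored form stated. For the downstream applications (e.g.\ \eqref{eq:4.14}--\eqref{eq:4.15}) this makes no difference, since $\beta$ and $f$ are fixed there, but you should flag that the $o(1)$ bookkeeping differs from the statement.

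For part (ii) there is a more substantive divergence. Your $L^{1,1}$-shortcut is valid under the hypothesis $k\in\mathbb{N}$ and even yields the stronger $O(1)$ bound, but it is not what the paper does: the paper splits at the \emph{time-dependent} scale $|y|=t^{1/4}$ (for $n=1$) or $|y|=(\log(t+2))^{1/4}$ (for $n=2$), uses Taylor on the kernel for the inner piece, and controls the outer piece by $\|W^{(\alpha,\beta)}_s(t)\|_{L^2}\cdot\int_{|y|\ge t^{1/4}}|f|\to 0$. The crucial gain is that this argument needs only $f\in L^1$, not $L^{1,1}$; and that weaker hypothesis is exactly what is invoked in \eqref{eq:4.7} to prove Theorem~\ref{thm:1.1} for data in $\{L^2\cap L^1\}^n$. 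Your version of (ii) would force the lower bound in Theorem~\ref{thm:1.1} back to $L^{1,1}$ data, which is precisely the restriction the paper is designed to remove.
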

\begin{proof}
(i)\ 
We only prove the estimate \eqref{eq:3.11} for $k >0$. 
For $k=0$, we can apply the same argument with a minor modification.
At first, we decompose 
$$
\left\| W^{(\alpha,\beta)}_{s} (t) \ast f 
-
\sum_{  |\gamma| \le k} m_{f, \gamma} 
\partial_{x}^{\gamma} W^{(\alpha, \beta)}_{s}(t) \right\|_{L^{2}}$$ into three parts;
\begin{equation*}
\begin{split}
\left\| W^{(\alpha,\beta)}_{s} (t) \ast f 
-
\sum_{  |\gamma| \le k} m_{f, \gamma} 
\partial_{x}^{\gamma} W^{(\alpha, \beta)}_{s}(t) \right\|_{L^{2}}
\le \| I_{1}(t) \|_{L^{2}} +\| I_{2}(t) \|_{L^{2}} +\| I_{3}(t) \|_{L^{2}}
\end{split}
\end{equation*}
where 
\begin{equation*}
\begin{split}
I_{1}(t,x) & : = \int_{|y| \le \beta^{\frac{1}{4}}} 
\left(W^{(\alpha,\beta)}_{s} (t,x-y)-
\sum_{  |\gamma| \le k} m_{f, \gamma} 
\partial_{x}^{\gamma} W^{(\alpha, \beta)}_{s}(t,x) \right) f(y) dy, \\
I_{2}(t,x) & : = \int_{|y| \ge \beta^{\frac{1}{4}}} 
\left(W^{(\alpha,\beta)}_{s} (t,x-y)-
\sum_{  |\gamma| \le k-1} m_{f, \gamma} 
\partial_{x}^{\gamma} W^{(\alpha, \beta)}_{s}(t,x) \right) f(y) dy
\end{split}
\end{equation*}
and 
\begin{equation*}
\begin{split}
I_{3}(t,x) : = \frac{1}{\gamma !}\int_{|y| \ge \beta^{\frac{1}{4}}} (-y)^{\gamma} f(y) dy
\sum_{  |\gamma| = k} m_{f, \gamma} 
\partial_{x}^{\gamma} W^{(\alpha, \beta)}_{s}(t,x).
\end{split}
\end{equation*}
For the estimate of $I_{1}(t,x)$, 
we apply Taylor's theorem to have 
\begin{equation*}
\begin{split}
W^{(\alpha,\beta)}_{s} (t,x-y)-
\sum_{  |\gamma| \le k} m_{f, \gamma} 
\partial_{x}^{\gamma} W^{(\alpha, \beta)}_{s}(t,x)
=\sum_{  |\gamma|= k+1} \frac{(-y)^{\gamma}}{\gamma!}
\partial_{x}^{\gamma} W^{(\alpha, \beta)}_{s}(t,x-\theta_{1}y)
\end{split}
\end{equation*}
for some $\theta_{1} \in (0,1)$.
Thus we see that 
\begin{equation} \label{eq:3.15}
\begin{split}
\| I_{1}(t) \|_{L^{2}} 
& \le C \beta^{\frac{1}{4}} \sum_{ |\gamma| = k+1} 
\int_{|y| \le \beta^{\frac{1}{4}}
} |y|^{k} 
\| \partial_{x}^{\gamma} 
W^{(\alpha,\beta)}_{s} (t, \cdot- \theta_{1} y) 
\|_{L^{2}(\mathbb{R}^{n}_{x})} |f(y)| dy\\
& \le C(\beta^{-\frac{n-3}{4}-\frac{k+1}{2}} + \beta^{\frac{1}{4}} o(1)) \| f \|_{L^{1,k}}
\end{split}
\end{equation}
by \eqref{eq:3.2}.
Similarly, noting that
\begin{equation*}
\begin{split}
W^{(\alpha,\beta)}_{s} (t,x-y)-
\sum_{  |\gamma| \le k-1} m_{f, \gamma} 
\partial_{x}^{\gamma} W^{(\alpha, \beta)}_{s}(t,x)
=\sum_{  |\gamma|= k} \frac{(-y)^{\gamma}}{\gamma!}
\partial_{x}^{\gamma} W^{(\alpha, \beta)}_{s}(t,x-\theta_{2}y)
\end{split}
\end{equation*}
for some $\theta_{2} \in (0,1)$,
we obtain
\begin{equation} \label{eq:3.16}
\begin{split}
\| I_{2}(t) \|_{L^{2}} 
& \le C \sum_{  |\gamma| =k}
\int_{|y| \ge \beta^{\frac{1}{4}}
}  
|y|^{k}
\| \partial_{x}^{\gamma} W^{(\alpha,\beta)}_{s} (t, \cdot- \theta y) \|_{L^{2}(\mathbb{R}^{n}_{x})}  |f(y)| dy \\
& \le C (\beta^{-\frac{n-2}{4}-\frac{k}{2}} +o(1))
\int_{|y| \ge \beta^{\frac{1}{4}} } 
|y|^{k} |f(y)| dy 
\end{split}
\end{equation}
as $t \to \infty$.
Moreover the direct calculation and \eqref{eq:3.2} give
\begin{equation} \label{eq:3.17}
\begin{split}
\| I_{3}(t) \|_{L^{2}} 
 \le C (\beta^{-\frac{n-2}{4}-\frac{k}{2}} +o(1))
\int_{|y| \ge \beta^{\frac{1}{4}} } 
|y|^{k} |f(y)| dy 
\end{split}
\end{equation}
as $t \to \infty$.
Combining the estimates \eqref{eq:3.15}-\eqref{eq:3.17},
and
\begin{equation} \label{eq:3.18}
\begin{split}
& \left\| \tilde{W}^{(\alpha,\beta)}_{s} (t) \ast f - 
\sum_{  |\gamma| \le k} m_{f, \gamma} 
\partial_{x}^{\gamma} \tilde{W}^{(\alpha, \beta)}_{s} (t) \right\|_{L^{2}} \\
& 
\le \left\| W^{(\alpha,\beta)}_{s} (t) \ast f - 
\sum_{  |\gamma| \le k} m_{f, \gamma} 
\partial_{x}^{\gamma} W^{(\alpha, \beta)}_{s} (t) \right\|_{L^{2}},
\end{split}
\end{equation}
we arrive at the desired estimate \eqref{eq:3.11}.
The estimate \eqref{eq:3.10} is shown in the same manner by \eqref{eq:3.1}.

\noindent
(ii) \
At first we prove the estimate \eqref{eq:3.12}. 
For $n=1$, we decompose the integrand in the LHS of \eqref{eq:3.12} as follows; 
\begin{equation*}
\begin{split}
\| W^{(\alpha, \beta)}_{s} (t) \ast f -m_{f,0}  W^{(\alpha, \beta)}_{s} (t) \|_{L^{2}} 
\le \| I_{4}(t) \|_{L^{2}} +\| I_{5}(t) \|_{L^{2}},
\end{split}
\end{equation*}
where 
\begin{equation*}
\begin{split}
I_{4}(t,x) & : = \int_{|y| \le t^{\frac{1}{4}}} 
(W^{(\alpha,\beta)}_{s} (t,x-y)- W^{(\alpha, \beta)}_{s}(t,x)) f(y) dy,
\end{split}
\end{equation*}
and 
\begin{equation*}
\begin{split}
I_{5}(t,x) & : = \int_{|y| \ge t^{\frac{1}{4}}} 
(W^{(\alpha,\beta)}_{s} (t,x-y)-
W^{(\alpha, \beta)}_{s}(t,x)) f(y) dy.
\end{split}
\end{equation*}
We estimate $I_{4}(t,x)$.
As in the estimate for $\| I_{1}(t) \|_{L^{2}}$,  
we have
\begin{equation*}
\begin{split}
W^{(\alpha,\beta)}_{s} (t,x-y)- W^{(\alpha, \beta)}_{s}(t,x)
=-y
\partial_{x} W^{(\alpha, \beta)}_{s}(t,x-\theta_{2}y)
\end{split}
\end{equation*}
for some $\theta_{2} \in (0,1)$ and then
\begin{equation} \label{eq:3.19}
\begin{split}
\| I_{4}(t) \|_{L^{2}} 
& \le C t^{\frac{1}{4}} 
\int_{|y| \le t^{\frac{1}{4}}
}
\| \partial_{x}
W^{(\alpha,\beta)}_{s} (t, \cdot- \theta_{2} y) 
\|_{L^{2}(\mathbb{R}_{x})} |f(y)| dy\\
& \le C(t^{\frac{1}{4}} + o(1)) \| f \|_{L^{1}}
\end{split}
\end{equation}
as $t \to \infty$.
It is easy to see that
\begin{equation} \label{eq:3.20}
\begin{split}
\| I_{5}(t) \|_{L^{2}} 
 \le C t^{\frac{1}{2}} 
\int_{|y| \ge t^{\frac{1}{4}} } |f(y)| dy 
\end{split}
\end{equation}
as $t \to \infty$.
Combining the estimates \eqref{eq:3.19} and \eqref{eq:3.20},
we obtain the desired estimate \eqref{eq:3.12}.


Next, we prove \eqref{eq:3.13}.
As in the proof of \eqref{eq:3.11} with minor modification, we decompose the integrand as follows; 

\begin{equation} \label{eq:3.21}
\begin{split}
\| W^{(\alpha, \beta)}_{s} (t) \ast f -m_{f,0}  W^{(\alpha, \beta)}_{s} (t) \|_{L^{2}} 
\le \| I_{6}(t) \|_{L^{2}} +\| I_{7}(t) \|_{L^{2}},
\end{split}
\end{equation}
where 
\begin{equation*}
\begin{split}
I_{6}(t,x) & : = \int_{|y| \le (\log(t+2))^{\frac{1}{4}}} 
(W^{(\alpha,\beta)}_{s} (t,x-y)- W^{(\alpha, \beta)}_{s}(t,x)) f(y) dy,
\end{split}
\end{equation*}
and 
\begin{equation*}
\begin{split}
I_{7}(t,x) & : = \int_{|y| \ge (\log(t+2))^{\frac{1}{4}} } 
(W^{(\alpha,\beta)}_{s} (t,x-y)-W^{(\alpha, \beta)}_{s}(t,x)) f(y) dy.
\end{split}
\end{equation*}
Once we have \eqref{eq:3.21}, the same argument in the proof of \eqref{eq:3.11} leads to
\begin{equation} \label{eq:3.22}
\begin{split}
\| I_{6}(t) \|_{L^{2}} 
& \le C( (\log(t+2))^{\frac{1}{4}} + o(1)) \| f \|_{L^{1}}
\end{split}
\end{equation}
and 
\begin{equation} \label{eq:3.23}
\begin{split}
\| I_{7}(t) \|_{L^{2}} 
 \le C \sqrt{\log(t+2)}
\int_{|y| \ge (\log(t+2))^{\frac{1}{4}}  } |f(y)| dy 
\end{split}
\end{equation}
as $t \to \infty$.
Summing up the estimates \eqref{eq:3.21}-\eqref{eq:3.23},
we have the desired estimate \eqref{eq:3.13}.
Noting \eqref{eq:3.18}, we also have \eqref{eq:3.14} immediately.
We complete the proof of Proposition \ref{prop:3.3}.
\end{proof}
%
\section{Proof of Theorem \ref{thm:1.1}}
In this section, we prove Theorem \ref{thm:1.1}. 
To this end, 
we show the estimate \eqref{eq:1.15} 
under the assumption on \eqref{eq:1.14}.
Indeed, once we have the expression of $\hat{u}_{N}(t)$, \eqref{eq:2.3}, 
we easily prove the upper bound estimate \eqref{eq:1.13} 
and we omit the detail.
\subsection{Lower bounds for $n=2$}
In this subsection, we show the estimate \eqref{eq:1.15}.
At first, 
it is easy to see that
\begin{equation} \label{eq:4.1}
\begin{split}
\| u_{N}(t) \|_{2} & \ge 
\| \hat{U}_{N,s}(t)+\hat{R}_{N,s}(t)) \|_{2}
-\|  \hat{U}_{N,c}(t)+\hat{R}_{N,c}(t) \|_{2} \\
& \ge \| e^{-\beta |\xi|^{2} } \hat{U}_{N,s}(t) \|_{2}
-\| \hat{U}_{N,c}(t)+\hat{R}_{N,c}(t) \|_{2}
-\| e^{-\beta |\xi|^{2} } \hat{R}_{N,s}(t) \|_{2}
\end{split}
\end{equation}
and 
\begin{equation} \label{eq:4.2}
\begin{split}
\| \hat{U}_{N,c}(t)+\hat{R}_{N,c}(t) \|_{2}
& \le 
\left\| 
\cos(t \alpha_{2} |\xi|) \hat{f}_{0N} \right\|_{2} 
+\left\| \left( |\cos(t \alpha_{1} |\xi|)|+ |\cos(t \alpha_{2} |\xi|)| \right)
\frac{
\xi_{N} (\xi \cdot \hat{f}_{0})
}{
|\xi|^{2}
} \right\|_{2} \\
& \le 2 \| f_{0} \|_{2}
\end{split}
\end{equation}
by \eqref{eq:2.3}.
Now we estimate $\| e^{-\beta |\xi|^{2} } \hat{U}_{N,s}(t) \|_{2}$ in \eqref{eq:4.1}.
Noting that $|\mathbb{S}^{1}|=2 \pi$, \eqref{eq:2.9} and \eqref{eq:2.10},
we see that 
\begin{equation} \label{eq:4.3}
\begin{split}
\| e^{-\beta |\xi|^{2} } \hat{U}_{N,s}(t) \|_{2}^{2} 
& = \frac{\pi |M_{1}|^{2}}{4}  
\int_{0}^{\infty} e^{-2 \beta r^{2}}
\left( \frac{\sin(t \alpha_{1} r)}{\alpha_{1}}
-
\frac{\sin(t \alpha_{2} r)}{\alpha_{2}} \right)^{2}r^{-1} dr\\
& +\frac{\pi m_{1N}^{2}}{2}  
\int_{0}^{\infty} e^{-2 \beta r^{2}}
\left( \frac{\sin(t \alpha_{1} r)}{\alpha_{1}}
+
\frac{\sin(t \alpha_{2} r)}{\alpha_{2}} \right)^{2}r^{-1} dr.
\end{split}
\end{equation}
Thus for $t >2$, 
noting that $\sin^{2} z= \frac{1-\cos (2z)}{2}$ and 
$\sin z_{1} \sin z_{2} =-\dfrac{1}{2}
\left( 
\cos(z_{1}+z_{2}) - \cos(z_{1}-z_{2})
\right)$,
we have 
\begin{equation} \label{eq:4.4}
\begin{split}
& \int_{0}^{\infty} e^{-2 \beta r^{2}}
\left( \frac{\sin(t \alpha_{1} r)}{\alpha_{1}}
-
\frac{\sin(t \alpha_{2} r)}{\alpha_{2}} \right)^{2} r^{-1} dr \\
& \ge e^{-2 \beta}
\int_{t^{-\frac{1}{2}}}^{1} 
\left( \frac{\sin(t \alpha_{1} r)}{\alpha_{1}}
-
\frac{\sin(t \alpha_{2} r)}{\alpha_{2}} \right)^{2}r^{-1} dr \\
& \ge \frac{ e^{-2 \beta}}{2}
\left(
\frac{1}{\alpha_{1}^{2}} 
+
\frac{1}{\alpha_{2}^{2}}
\right)  
\int_{t^{-\frac{1}{2}}}^{1} r^{-1} dr -C \left| \int_{t^{-\frac{1}{2}}}^{1} 
\left\{ \cos(2 t \alpha_{1} r) - \cos(2 t \alpha_{2} r) 
\right\}
r^{-1} dr \right| \\
& 
-C \left| \int_{t^{-\frac{1}{2}}}^{1} 
\left\{\cos( t (\alpha_{1} +\alpha_{2}) r) 
+
\cos( t (\alpha_{1} -\alpha_{2}) r)
\right\}
r^{-1} dr \right|.
\end{split}
\end{equation}
By the same way as in \eqref{eq:3.8}, 
we also have 
\begin{equation} \label{eq:4.5}
\begin{split}
& \left| \int_{t^{-\frac{1}{2}}}^{1} 
\cos(2 t \alpha_{2} r) 
r^{-1} dr \right| \le Ct^{-\frac{1}{2}} +C, \\
& 
\left| \int_{t^{-\frac{1}{2}}}^{1} 
\cos(t(\alpha_{1} + \alpha_{2}) r) 
r^{-1} dr \right|  +
\left| \int_{t^{-\frac{1}{2}}}^{1} 
\cos(t(\alpha_{1} - \alpha_{2}) r) 
r^{-1} dr \right| \le Ct^{-\frac{1}{2}} +C. 
\end{split}
\end{equation}
Summing up \eqref{eq:4.3}-\eqref{eq:4.5},
we obtain
\begin{equation} \label{eq:4.6}
\begin{split}
\| e^{-\beta |\xi|^{2} } \hat{U}_{N,s}(t) \|_{2}^{2} 
& \ge C_{0} \frac{e^{-\beta} \pi |M_{1}|^{2}}{8}
\left(
\frac{1}{\alpha_{1}^{2}} 
+
\frac{1}{\alpha_{2}^{2}}
\right) \log(t+2) -C
\end{split}
\end{equation}
for $t \gg 1$.
Finally we show the estimate for $\| e^{-\beta |\xi|^{2} } \hat{R}_{N,s}(t) \|_{2}$ in \eqref{eq:4.1}.
By \eqref{eq:3.13} and \eqref{eq:3.14}, 
we see that
\begin{equation} \label{eq:4.7}
\begin{split}
& \| e^{-\beta |\xi|^{2} } \hat{R}_{N,s}(t) \|_{2} \\
& \le
\| W^{(\alpha_{2},\beta)}_{s} (t) \ast f_{1N} - m_{1N} W^{(\alpha_{2},\beta)}_{s} (t) \|_{2} 
+ \sum_{\ell=1}^{2} \sum_{j=1}^{2}
\| 
\tilde{W}^{(\alpha_{\ell},\beta)}_{s} (t) \ast f_{1j} - 
m_{1j} \tilde{W}^{(\alpha_{\ell},\beta)}_{s} (t) \|_{2} \\
& =o(\sqrt{\log(t+2)}),
\end{split}
\end{equation}
as $t \to \infty$.
Combining the estimates \eqref{eq:4.1}, \eqref{eq:4.2}, \eqref{eq:4.6} and \eqref{eq:4.7}
gives the desired estimate \eqref{eq:1.15}.
\subsection{Lower bounds for $n \ge 3$}
We firstly show the case $|M_{0}| \neq 0$ and $|M_{1}| \neq 0$.
We begin with the observation that
\begin{equation} \label{eq:4.8}
\begin{split}
\| u_{N}(t) \|_{2} & \ge \| e^{-t |\xi|^{2} } \hat{u}_{N}(t) \|_{2} \\
& \ge \| e^{-\beta |\xi|^{2} } ( \hat{U}_{N,c}(t)+\hat{U}_{N,s}(t) ) \|_{2}
-\| e^{-\beta |\xi|^{2} } \hat{R}_{N,c}(t) \|_{2}
-\| e^{-\beta |\xi|^{2} } \hat{R}_{N,s}(t) \|_{2}
\end{split}
\end{equation}
by $0< e^{-\beta |\xi|^{2}} \le 1$.
Now we estimate $\| e^{-\beta |\xi|^{2} } ( \hat{U}_{N,c}(t)+\hat{U}_{N,s}(t) ) \|_{2}^{2}$. 
The direct calculation gives 
\begin{equation} \label{eq:4.9}
\begin{split}
\| e^{-\beta |\xi|^{2} } ( \hat{U}_{N,c}(t)+\hat{U}_{N,s}(t) ) \|_{2}^{2}
& = \| e^{-\beta |\xi|^{2} } \hat{U}_{N,c}(t) \|_{2}^{2}
+
\| e^{-\beta |\xi|^{2} } \hat{U}_{N,s}(t)  \|_{2}^{2} \\
& + 2 \int_{\mathbb{R}^{n}} e^{-2\beta |\xi|^{2}} \hat{U}_{N,c}(t, \xi) \hat{U}_{N,s}(t, \xi) d \xi
\end{split}
\end{equation}
and 
\begin{equation} \label{eq:4.10}
\begin{split}
& \int_{\mathbb{R}^{n}} e^{-2\beta |\xi|^{2}} \hat{U}_{N,c}(t, \xi) \hat{U}_{N,s}(t, \xi) d \xi =o(1)
\end{split}
\end{equation}
as $t \to \infty$ by \eqref{eq:2.11}.
Here we apply \eqref{eq:2.9}, \eqref{eq:2.10} and \eqref{eq:2.11} to have
\begin{equation*}
\begin{split}
\| e^{-\beta |\xi|^{2} } \hat{U}_{N,c}(t) \|_{2}^{2} 
& = \frac{m_{0N}^{2}}{ \alpha_{2}^{2}} |\mathbb{S}^{n-1}| \left(1-\frac{2}{n} \right)
\int_{0}^{\infty} e^{-2 \beta r^{2}} \cos^{2}(t \alpha_{2} r)r^{n-1} dr \\
& + \frac{ |\mathbb{S}^{n-1}| |M_{0}|^{2} }{n(n+2)}
\int_{0}^{\infty}
e^{-2 \beta r^{2}} 
\left( 
\cos^{2}(t \alpha_{1} r)
+
\cos^{2}(t \alpha_{2} r)
\right) r^{n-1} dr +o(1) \\
& \ge \frac{ |M_{0}|^{2} }{n(n+2)} 
(\| W^{(\alpha_{1}, \beta)}_{c} (t) \|_{2}^{2}+\| W^{(\alpha_{2}, \beta)}_{c} (t) \|_{2}^{2}) +o(1),
\end{split}
\end{equation*}
as $t \to \infty$.
Namely, we have
\begin{equation} \label{eq:4.11}
\begin{split}
\| e^{-\beta |\xi|^{2} } \hat{U}_{N,c}(t) \|_{2}
& \ge |M_{0}|
\left(
\frac{
|\mathbb{S}^{n-1}| }{n(n+2)} 
2^{-\frac{n}{2}-1} \Gamma\left( \frac{n}{2} \right) \right)^{\frac{1}{2}} \beta^{-\frac{n}{4}}
\end{split}
\end{equation}
by \eqref{eq:3.1} for $t \gg 1$.
We apply the same argument to have
\begin{equation*}
\begin{split}
\| e^{-\beta |\xi|^{2} } \hat{U}_{N,s}(t) \|_{2}^{2} 
& \ge \frac{ |M_{1}|^{2} }{n(n+2)} 
(\| W^{(\alpha_{1}, \beta)}_{s} (t) \|_{2}^{2}+\| W^{(\alpha_{2}, \beta)}_{s} (t) \|_{2}^{2}) +o(1) \\
\end{split}
\end{equation*}
as $t \to \infty$ and then
\begin{equation} \label{eq:4.12}
\begin{split}
\| e^{-\beta |\xi|^{2} } \hat{U}_{N,s}(t) \|_{2}
& \ge |M_{1}|
\left(
\frac{
|\mathbb{S}^{n-1}| }{n(n+2)} 
2^{-\frac{n}{2}} \Gamma\left( \frac{n}{2}-1 \right) \right)^{\frac{1}{2}} \beta^{-\frac{n-2}{4}}
\end{split}
\end{equation}
for $t \gg 1$ by \eqref{eq:3.2}.
Finally we show the estimate of $\| e^{-\beta |\xi|^{2} } \hat{R}_{N,c}(t) \|_{2}^{2}$.
Here we note the fact that
\begin{equation} \label{eq:4.13}
\begin{split}
\lim_{\beta \to \infty}
\sum_{k=1}^{n} \int_{|y| \ge \beta^{\frac{1}{4}}} |f_{jk}(y)|dy=0
\end{split}
\end{equation}
for $j=0,1$,
since $f_{0} \in \{ L^{1} \}^{n}$ and $f_{1} \in \{ L^{1} \}^{n}$.
Then applying \eqref{eq:3.10} with $k=0$ and \eqref{eq:4.13},
we see that there exists $\beta_{0}>0$ such that 
\begin{equation} \label{eq:4.14}
\begin{split}
& \| e^{-\beta |\xi|^{2} } \hat{R}_{N,c}(t) \|_{2} \\
& \le 
\| W^{(\alpha_{2}, \beta)}_{c}(t) \ast f_{0N}-m_{0N} W^{(\alpha_{2}, \beta)}_{c}(t) \|_{2}  
+\sum_{\ell=1}^{2} \sum_{k=1}^{n} 
\left\| \tilde{W}^{(\alpha_{\ell}, \beta)}_{c,(N,k)}(t) \ast f_{0k}
-m_{0k} \tilde{W}^{(\alpha_{\ell}, \beta)}_{c, (N,k)}(t) \right\|_{2} \\
& \le C\beta^{-\frac{n}{4}-\frac{1}{4}} \| f_{0} \|_{1}
+C \beta^{-\frac{n}{4}}\sum_{k=1}^{n} \int_{|y| \ge \beta^{\frac{1}{4}}} |f_{0k}(y)|dy +o(1) \\
& \le \frac{|M_{0}|}{2} 
\left(
\frac{
|\mathbb{S}^{n-1}| }{n(n+2)} 
2^{-\frac{n}{2}-1} \Gamma\left( \frac{n}{2} \right) \right)^{\frac{1}{2}} \beta^{-\frac{n}{4}}
\end{split}
\end{equation}
for $\beta \ge \beta_{0}$ and $t \gg 1$.
Similarly, there exists $\beta_{1}>0$ such that
\begin{equation} \label{eq:4.15}
\begin{split}
& \| e^{-\beta |\xi|^{2} } \hat{R}_{N,s}(t) \|_{2} \\
&\le 
C \beta^{-\frac{n-1}{4}} \| f_{1} \|_{1}
+C\beta^{-\frac{n-2}{4}}
\sum_{k=1}^{n} \int_{|y| \ge \beta^{\frac{1}{4}}} |f_{1k}(y)|dy +o(1) \\
& \le \frac{|M_{1}|}{2}
\left(
\frac{
|\mathbb{S}^{n-1}| }{n(n+2)} 
2^{-\frac{n}{2}} \Gamma\left( \frac{n}{2}-1 \right) \right)^{\frac{1}{2}} \beta^{-\frac{n-2}{4}},
\end{split}
\end{equation}
for $\beta \ge \beta_{1}$ and $t \gg 1$ by \eqref{eq:3.11} with $k=0$ and \eqref{eq:4.13}.
Therefore taking $\beta \ge \max \{ \beta_{0}, \beta_{1} \}$, and combining the estimates 
\eqref{eq:4.8}-\eqref{eq:4.12}, \eqref{eq:4.14} and \eqref{eq:4.15},
we obtain 
\begin{equation*}
\begin{split}
\| u_{N}(t) \|_{2} & \ge 
\frac{|M_{0}|}{2} 
\left(
\frac{
|\mathbb{S}^{n-1}| }{n(n+2)} 
2^{-\frac{n}{2}-1} \Gamma\left( \frac{n}{2} \right) \right)^{\frac{1}{2}} \beta^{-\frac{n}{4}} \\
& +\frac{|M_{1}|}{2}
\left(
\frac{
|\mathbb{S}^{n-1}| }{n(n+2)} 
2^{-\frac{n}{2}} \Gamma\left( \frac{n}{2}-1 \right) \right)^{\frac{1}{2}} \beta^{-\frac{n-2}{4}}
+o(1) 
\end{split}
\end{equation*}
as $t \to \infty$, 
which implies the desired estimate \eqref{eq:1.15} for $|M_{0}| \neq 0$ and $|M_{1}| \neq 0$.
On the other hand, if $|M_{0}| = 0$ and $|M_{1}| \neq 0$, 
the estimates \eqref{eq:3.10} and \eqref{eq:4.13} yield that 
there exists $\tilde{\beta}_{0}>0$ such that
\begin{equation*}
\begin{split}
& \| e^{-\beta |\xi|^{2} } \hat{R}_{N,c}(t) \|_{2} \le \frac{|M_{1}|}{4} 
\left(
\frac{
|\mathbb{S}^{n-1}| }{n(n+2)} 
2^{-\frac{n}{2}} \Gamma\left( \frac{n}{2}-1 \right) \right)^{\frac{1}{2}} \beta^{-\frac{n-2}{4}}
\end{split}
\end{equation*}
for $\beta \ge \tilde{\beta}_{0}$ and $t \gg 1$.
Then the same argument in the case $|M_{0}| \neq 0$ and $|M_{1}| \neq 0$ gives 
\begin{equation*}
\begin{split}
\| u_{N}(t) \|_{2} & \ge \frac{|M_{1}|}{4}
\left(
\frac{
|\mathbb{S}^{n-1}| }{n(n+2)} 
2^{-\frac{n}{2}} \Gamma\left( \frac{n}{2}-1 \right) \right)^{\frac{1}{2}} \beta^{-\frac{n-2}{4}}
+o(1) 
\end{split}
\end{equation*}
as $t \to \infty$, which is the desired estimate.
If $|M_{0}| \neq 0$ and $|M_{1}| = 0$,
we note that 
\begin{equation*}
\begin{split}
\int_{\mathbb{R}^{n}} e^{-2 \beta |\xi|^{2} } 
(\hat{V}_{N,c}(t)\bar{\hat{V}}_{N,s}(t)+\hat{V}_{N,c}(t) \bar{\hat{V}}_{N,s}(t)) d \xi 
 =o(1)
\end{split}
\end{equation*}
as $t \to \infty$ by \eqref{eq:2.11} and 
\begin{equation*}
\begin{split}
e^{-2 \beta |\xi|^{2} } \frac{\hat{f}_{0N} \bar{\hat{f}}_{1 N}}{|\xi|},  
e^{-2 \beta |\xi|^{2} } \hat{f}_{0N} \frac{\xi_{N} (\xi \cdot \bar{\hat{f}}_{1})}{|\xi|^{3}}, 
e^{-2 \beta |\xi|^{2} } \hat{f}_{1N} \frac{\xi_{N} (\xi \cdot \bar{\hat{f}}_{0})}{|\xi|^{3}}, 
e^{-2 \beta |\xi|^{2} } \frac{\xi_{N}^{2} (\xi \cdot \hat{f}_{0})
(\xi \cdot \bar{\hat{f}}_{1})}{|\xi|^{5}}
\in L^{1}(\mathbb{R}^{n};\mathbb{R})
\end{split}
\end{equation*}
for $n \ge 3$.
Thus we see that
\begin{equation*} 
\begin{split}
\| u_{N}(t) \|_{2} & \ge \| e^{-\beta |\xi|^{2} } \hat{V}_{N,c}(t) \|_{2} 
+\| e^{-\beta |\xi|^{2} } \hat{V}_{N,s}(t) ) \|_{2} +o(1) \\
& 
\ge \| e^{-\beta |\xi|^{2} } \hat{U}_{N,c}(t) \|_{2}
-\| e^{-\beta |\xi|^{2} } \hat{R}_{N,c}(t) \|_{2} +o(1)
\end{split}
\end{equation*}
as $t \to \infty$ by \eqref{eq:2.1} and \eqref{eq:2.2}.
Taking $\beta \ge \beta_{1}$, 
we apply the argument of the estimates \eqref{eq:4.12}-\eqref{eq:4.14} to have  
\begin{equation*}
\begin{split}
\| u_{N}(t) \|_{2} & \ge 
\frac{|M_{0}|}{2} 
\left(
\frac{
|\mathbb{S}^{n-1}| }{n(n+2)} 
2^{-\frac{n}{2}-1} \Gamma\left( \frac{n}{2} \right) \right)^{\frac{1}{2}} \beta^{-\frac{n}{4}} 
+o(1) 
\end{split}
\end{equation*}
as $t \to \infty$, which implies the estimate \eqref{eq:1.15} with $|M_{0}| \neq 0$ and $|M_{1}| = 0$.
We complete the proof of Theorem \ref{thm:1.1}.
%
\section{Proof of Theorem \ref{thm:1.4}}
This section is devoted to the proof of Theorem \ref{thm:1.4}.
As we make the assumption $f_{0} \in \{ L^{1,1} \}^{n}$ and $f_{1} \in \{ L^{1,1} \}^{n}$
in this section, we note the fact that
\begin{equation} \label{eq:5.1}
\begin{split}
\lim_{\beta \to \infty}
\sum_{k=1}^{n} \int_{|y| \ge \beta^{\frac{1}{4}}} |y| |f_{jk}(y)|dy=0
\end{split}
\end{equation}
for $j=0,1$.

We begin with estimates which are valid for all space dimension $n \ge 2$.
The direct calculation yields 
\begin{equation*}
\begin{split}
& \| e^{-\beta |\xi|^{2} } ( \hat{U}_{j,c}(t)+\hat{U}_{j,s}(t) + \hat{\tilde{U}}_{j,c}(t)+\hat{\tilde{U}}_{j,s}(t) ) \|_{2}^{2} \\
& = 
\| e^{-\beta |\xi|^{2} } \hat{U}_{j,c}(t)\|_{2}^{2} 
+\| e^{-\beta |\xi|^{2} } \hat{U}_{j,s}(t)  \|_{2}^{2} 
+
\| e^{-\beta |\xi|^{2} } \hat{\tilde{U}}_{j,c}(t) \|_{2}^{2}
+
\| e^{-\beta |\xi|^{2} } \hat{\tilde{U}}_{j,s}(t) \|_{2}^{2} +o(1)
\end{split}
\end{equation*}
by $|z|^{2}=x_{1}^{2}+x_{2}^{2}$ for $z=x_{1}+ix_{2} \in \mathbb{C}$ with $x_{1}, x_{2} \in \mathbb{R}$
and \eqref{eq:2.11}.
Thus noting that $A^{2}+B^{2} \ge \frac{1}{2}(A+B)^{2}$ for $A, B \ge 0$, 
we see that
\begin{equation} \label{eq:5.2}
\begin{split}
& \| u_{N}(t) \|_{2} \ge \| e^{-\beta |\xi|^{2} } \hat{u}_{N}(t) \|_{2} \\
& \ge 
C (\| e^{-\beta |\xi|^{2} } \hat{U}_{j,c}(t)\|_{2}^{2} 
+\| e^{-\beta |\xi|^{2} } \hat{U}_{j,s}(t)  \|_{2}^{2} 
+
\| e^{-\beta |\xi|^{2} } \hat{\tilde{U}}_{j,c}(t) \|_{2}
+
\| e^{-\beta |\xi|^{2} } \hat{\tilde{U}}_{j,s}(t) \|_{2})+o(1)\\
& 
-\| e^{-\beta |\xi|^{2} } \hat{\tilde{R}}_{j,c}(t) \|_{2}
-\| e^{-\beta |\xi|^{2} } \hat{\tilde{R}}_{j,s}(t) \|_{2}
\end{split}
\end{equation}
as $t \to \infty$.
In the proof of Theorem \ref{thm:1.4}, 
it is useful to express the terms $\| e^{-\beta |\xi|^{2} } \hat{\tilde{U}}_{j,c}(t) \|_{2}$ and 
$\| e^{-\beta |\xi|^{2} } \hat{\tilde{U}}_{j,s}(t) \|_{2}$ in a form that makes the $\beta$ dependence clear:
\begin{equation} \label{eq:5.3}
\begin{split}
\| e^{-\beta |\xi|^{2} } \hat{\tilde{U}}_{N,c}(t) \|_{2}^{2} = \frac{1}{2}
\int_{0}^{\infty} e^{-2 \beta r^{2} } r^{n+1} dr  (K_{01}^{(N)}-2K_{02}^{(N)} +2 K_{03}^{(N)})+o(1)
\end{split}
\end{equation}
and 
\begin{equation} \label{eq:5.4}
\begin{split}
& \| e^{-\beta |\xi|^{2} } \hat{\tilde{U}}_{N,s}(t) \|_{2}^{2} \\
& = \frac{1}{2}
\int_{0}^{\infty} e^{-2 \beta r^{2} } r^{n-1} dr 
\left\{  
\frac{1}{2 \alpha_{2}^{2}}
(K_{11}^{(N)}-2K_{12}^{(N)} +K_{13}^{(N)})+
\frac{1}{2 \alpha_{1}^{2}} K_{13}^{(N)}
\right\}
+
o(1)
\end{split}
\end{equation}
as $t \to \infty$, 
where 
\begin{equation} \label{eq:5.5}
\begin{split}
K_{j1}^{(N)} & := 
\int_{\mathbb{S}^{d-1}} 
(\omega \cdot P_{jN})^{2} d \omega,\\
K_{j2}^{(N)} & :=\sum_{k=1}^{n} \int_{\mathbb{S}^{n-1}}
\omega_{N} \omega_{k} 
(\omega \cdot P_{jN})(\omega \cdot P_{jk}) d \omega, \\
K_{j3}^{(N)} & :=  \int_{\mathbb{S}^{n-1}}
\omega_{N}^{2} 
\left( \sum_{k=1}^{n} \omega_{k} (\omega \cdot P_{jk}) \right)^{2} d \omega
\end{split}
\end{equation}
for $j=0,1$.
We note that \eqref{eq:5.3} and \eqref{eq:5.4} are valid even if we take $\beta=t>0$.

Now we collect the values of the spherical integrals defined by \eqref{eq:5.5} precisely.
%
\begin{lem} \label{Lem:5.1} 
Let $j= 0,1$.
The following statements hold.
\begin{equation} \label{eq:5.6}
\begin{split}
K_{j1}^{(N)} = \frac{|\mathbb{S}^{n-1}|}{n}|P_{j N}|^{2} 
=\frac{|\mathbb{S}^{n-1}|}{n} \sum_{k=1}^{n} p_{j N, k}^{2},
\end{split}
\end{equation}
\begin{equation} \label{eq:5.7}
\begin{split}
K_{j2}^{(N)} = \dfrac{|\mathbb{S}^{n-1}|}{n(n+2)} 
\displaystyle\sum_{ \substack{ k=1 \\ k \neq N} }^{n} 
(p_{jN,N} p_{j k, k} +p_{j N, k} p_{j k, N}+ p_{jN, k}^{2}) 
+\dfrac{3 |\mathbb{S}^{n-1}|}{n(n+2)} p_{jN, N}^{2},
\end{split}
\end{equation}
\begin{equation} \label{eq:5.8}
\begin{split}
K_{j3}^{(N)} & =\frac{ |\mathbb{S}^{n-1}|}{n(n+2)(n+4)}
\left\{ 
15 p_{jN,N}^{2} +  \sum_{ \substack{k=1 \\ k \neq N} }^{n} (3p_{jN,k}^{2}
+3p_{jk,N}^{2} 
+ 2 p_{jk,k}^{2} )
+ \left( 
\sum_{ \substack{ k=1 \\ k \neq N} }^{n} p_{jk,k} 
\right)^{2} 
\right. \\
& + 
\left. 
6 \sum_{ \substack{ k=1 \\ k \neq N} }^{n}  (p_{jN,N} p_{jk,k} + p_{jN,k} p_{jk, N} )
+\sum_{ \substack{ k, \ell=1 \\ k < \ell \\ k \neq N, \ell \neq N } }^{n} 
(p_{jk, \ell} + p_{j \ell, k} )^{2} 
\right\}.
\end{split}
\end{equation}
for $n \ge 3$ and 
\begin{equation} \label{eq:5.9}
\begin{split}
K_{j3}^{(N)} & =\frac{\pi}{8}
\left\{ 
5 p_{jN,N}^{2} + p_{j1,2}^{2} +p_{j2,1}^{2} +   
+2(p_{j1,1} p_{j2,2} + p_{j1,2} p_{j2, 1} )+
\sum_{ \substack{ k=1 \\ k \neq N} }^{2}  p_{jk,k}^{2}
\right\}.
\end{split}
\end{equation}
for $n=2$
\end{lem}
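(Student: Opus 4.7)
Each $K_{j\ell}^{(N)}$ ($\ell=1,2,3$) is a polynomial integral on $\mathbb{S}^{n-1}$ of total degree $2$, $4$, and $6$, respectively. My uniform plan is to expand every dot product $\omega\cdot P_{jk}=\sum_\ell p_{jk,\ell}\omega_\ell$ into monomials, discard any term in which some $\omega_a$ appears to an odd power (which vanishes by \eqref{eq:2.4}), evaluate what survives via \eqref{eq:2.6}--\eqref{eq:2.8}, and regroup by how many of the dummy indices coincide with $N$ or with each other. The identity \eqref{eq:5.6} is immediate from \eqref{eq:2.6} once the diagonal $k=\ell$ terms in $(\omega\cdot P_{jN})^2$ are isolated. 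For $K_{j2}^{(N)}$ the expansion produces four-index integrals $\int\omega_N\omega_k\omega_\ell\omega_m\,d\omega$, which by the Wick-type pairing identity read off from \eqref{eq:2.4}, \eqref{eq:2.6}, \eqref{eq:2.7} equal $\frac{|\mathbb{S}^{n-1}|}{n(n+2)}(\delta_{Nk}\delta_{\ell m}+\delta_{N\ell}\delta_{km}+\delta_{Nm}\delta_{k\ell})$; carrying out the three contractions gives $\sum_\ell p_{jN,\ell}^2+p_{jN,N}\sum_k p_{jk,k}+\sum_k p_{jN,k}p_{jk,N}$, and pulling out the $k=N$ contribution (counted three times as $3p_{jN,N}^2$) yields \eqref{eq:5.7}.

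For $K_{j3}^{(N)}$, I decompose
\[
\sum_{k=1}^n\omega_k(\omega\cdot P_{jk})=D_j(\omega)+F_j(\omega),\quad D_j(\omega):=\sum_{k=1}^n p_{jk,k}\omega_k^2,\quad F_j(\omega):=\sum_{k<\ell}(p_{jk,\ell}+p_{j\ell,k})\omega_k\omega_\ell,
\]
so that $K_{j3}^{(N)}=\int\omega_N^2(D_j+F_j)^2\,d\omega$. The cross term $2\int\omega_N^2 D_jF_j\,d\omega$ vanishes because each of its monomials carries an odd power of some off-diagonal index. The diagonal-diagonal piece reduces to $\sum_{k_1,k_2}p_{jk_1,k_1}p_{jk_2,k_2}\int\omega_N^2\omega_{k_1}^2\omega_{k_2}^2\,d\omega$, and by \eqref{eq:2.8} each term carries coefficient $15$, $3$, or $1$ (times $|\mathbb{S}^{n-1}|/[n(n+2)(n+4)]$) according as the multiset $\{N,k_1,k_2\}$ has one, two, or three distinct elements. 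The off-diagonal piece $\sum_{k_1<k_2,\,k_3<k_4}(p_{jk_1,k_2}+p_{jk_2,k_1})(p_{jk_3,k_4}+p_{jk_4,k_3})\int\omega_N^2\omega_{k_1}\omega_{k_2}\omega_{k_3}\omega_{k_4}\,d\omega$ has integrand nonzero only when the multiset $\{N,N,k_1,k_2,k_3,k_4\}$ has all even multiplicities; a short enumeration using $k_1<k_2$, $k_3<k_4$ shows every surviving configuration forces $(k_1,k_2)=(k_3,k_4)$, collapsing this piece to $\sum_{k_1<k_2}(p_{jk_1,k_2}+p_{jk_2,k_1})^2\int\omega_N^2\omega_{k_1}^2\omega_{k_2}^2\,d\omega$, to which \eqref{eq:2.8} again applies with the same $15/3/1$ dichotomy. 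Assembling both pieces and splitting the sums into the ``$k=N$'' and ``$k\neq N$'' contributions produces \eqref{eq:5.8}.

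For $n=2$ the multiset $\{N,k_1,k_2\}$ cannot have three distinct values, so the branch of \eqref{eq:2.8} with coefficient $1$ never contributes, and the off-diagonal part of $F_j$ degenerates to the single pair $(k_1,k_2)=(1,2)$; re-running the same bookkeeping with only the surviving branches, and using $\int\omega_1^4\omega_2^2\,d\omega=\int\omega_1^2\omega_2^4\,d\omega=\pi/8$ and $\int\omega_N^6\,d\omega=5\pi/8$, yields \eqref{eq:5.9}. I expect the main obstacle to be precisely this combinatorial accounting for $K_{j3}^{(N)}$: correctly attaching the coefficients $15$, $3$, $1$ from \eqref{eq:2.8} to each index configuration, verifying that the pairing constraint on the six-index off-diagonal piece really does force $(k_1,k_2)=(k_3,k_4)$ in every surviving case, and handling the degenerate $n=2$ instance separately. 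Once this is carried out carefully, everything else is a transparent reorganization of finite sums.
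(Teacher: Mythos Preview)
Your argument is correct. For \eqref{eq:5.6} and \eqref{eq:5.7} your approach and the paper's coincide: both expand the dot products and apply \eqref{eq:2.4}--\eqref{eq:2.7}; your compact Wick-type formula for $\int\omega_N\omega_k\omega_\ell\omega_m\,d\omega$ is simply a unified way of writing the case distinction in \eqref{eq:2.7}. For \eqref{eq:5.8}--\eqref{eq:5.9}, however, you use a genuinely different decomposition. The paper expands the square $\bigl(\sum_k\omega_k(\omega\cdot P_{jk})\bigr)^2=\sum_{k,\ell}\omega_k\omega_\ell(\omega\cdot P_{jk})(\omega\cdot P_{j\ell})$ and splits the double sum into a diagonal piece $K_{j3,1}^{(N)}$, a piece $K_{j3,2}^{(N)}$ where exactly one of $k,\ell$ equals $N$, and an off-diagonal piece $K_{j3,3}^{(N)}$ with $k,\ell\neq N$; inside each piece the factors $(\omega\cdot P_{jk})$ are then expanded again. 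You instead first rewrite the single sum as a quadratic form $\sum_{k,\ell}p_{jk,\ell}\,\omega_k\omega_\ell$ and split it as $D_j+F_j$ into its diagonal and symmetrized off-diagonal parts before squaring. This buys you an immediate parity cancellation (the cross term $\int\omega_N^2 D_jF_j$ vanishes outright) and lets \eqref{eq:2.8} apply directly to both $\int\omega_N^2 D_j^2$ and $\int\omega_N^2 F_j^2$ without any further expansion; the paper's route needs one more layer of expansion inside each $K_{j3,m}^{(N)}$. Both lead to the same expression after regrouping, but your decomposition is the more economical one.
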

\begin{proof}
We easily have \eqref{eq:5.6} as a direct consequence of \eqref{eq:2.4} and \eqref{eq:2.6}.
In order to prove \eqref{eq:5.7}, 
we use \eqref{eq:2.4} and \eqref{eq:2.7} to see that
\begin{equation*}
\begin{split}
K_{j2}^{(N)} & =\frac{|\mathbb{S}^{n-1}|}{n(n+2)} 
\sum_{ \substack{ k=1 \\ k\neq  N} }^{n} (p_{jN,N} p_{jk, k} +p_{j N, k} p_{jk, N}) 
+ \int_{\mathbb{S}^{n-1}} \omega_{N}^{2} 
(\omega \cdot P_{jN})^{2} d \omega 
\end{split}
\end{equation*}
and 
\begin{equation*}
\begin{split}
\int_{\mathbb{S}^{n-1}} \omega_{N}^{2} 
(\omega \cdot P_{jN})^{2} d \omega 
& = \frac{2 |\mathbb{S}^{n-1}|}{n(n+2)} 
\sum_{ \substack{k=1 \\ k \neq N} }^{n} 
 p_{jN, k}^{2}+
\frac{3 |\mathbb{S}^{n-1}|}{n(n+2)} p_{jN, N}^{2},
\end{split}
\end{equation*}
which implies \eqref{eq:5.7}.
Next we prove \eqref{eq:5.8}.
To this end we decompose $K_{j3}^{(N)}$ into three parts:
\begin{equation} \label{eq:5.10}
\begin{split}
K_{j3}^{(N)} =K_{j3,1}^{(N)} +K_{j3,2}^{(N)} +K_{j3,3}^{(N)},
\end{split}
\end{equation}
where 
\begin{equation*}
\begin{split}
K_{j3,1}^{(N)}  & := \sum_{k=1}^{n} \int_{\mathbb{S}^{n-1}} \omega_{N}^{2} \omega_{k}^{2} 
(\omega \cdot P_{jk} )^{2} d \omega, \\
K_{j3,2}^{(N)}  & := 2 \sum_{ \substack{ k=1 \\ k \neq N} }^{n} 
\int_{\mathbb{S}^{n-1}} \omega_{N}^{3} \omega_{k} 
(\omega \cdot P_{jN} )(\omega \cdot P_{jk} ) d \omega, \\
K_{j3,3}^{(N)}  & := 2 \sum_{ \substack{ k, \ell=1 \\ k < \ell \\ k \neq N, \ell \neq N } }^{n}  
\int_{\mathbb{S}^{n-1}} \omega_{N}^{2} \omega_{k} \omega_{\ell}
(\omega \cdot P_{jk} )(\omega \cdot P_{j\ell} ) d \omega.
\end{split}
\end{equation*}
We apply \eqref{eq:2.4} and \eqref{eq:2.8} to have 
\begin{equation*}
\begin{split}
\int_{\mathbb{S}^{d-1}} \omega_{N}^{4} 
(\omega \cdot P_{jN} )^{2} d \omega
& = p_{jN,N}^{2} \int_{\mathbb{S}^{n-1}} \omega_{N}^{6} d \omega + 
\sum_{ \substack{ k=1 \\ k \neq N} }^{n} (p_{jN,k}^{2}+ p_{jk,N}^{2} ) 
\int_{\mathbb{S}^{n-1}} \omega_{N}^{4} \omega_{k}^{2} d \omega 
\end{split}
\end{equation*}
and 
\begin{equation*}
\begin{split}
& \sum_{ \substack{ k=1 \\ k \neq N} }^{n} 
\int_{\mathbb{S}^{n-1}} \omega_{N}^{2} \omega_{k}^{2} 
(\omega \cdot P_{jk} )^{2} d \omega \\
& =  \sum_{ \substack{ k=1 \\ k \neq N} }^{n} 
p_{jk,k}^{2}  \int_{\mathbb{S}^{n-1}} \omega_{N}^{2} \omega_{k}^{4} d \omega  
+  \sum_{ \substack{ k=1 \\ k \neq N} }^{n} 
\sum_{ \substack{ \ell=1 \\ \ell \neq N \\ \ell \neq k} }^{n} p_{jk, \ell}^{2} 
\int_{\mathbb{S}^{n-1}} \omega_{N}^{2} \omega_{k}^{2} \omega_{\ell}^{2} d \omega.
\end{split}
\end{equation*}
This leads 
\begin{equation} \label{eq:5.11}
\begin{split}
K_{j3,1}^{(N)}   
& = \frac{2 |\mathbb{S}^{n-1}|}{n(n+2)(n+4)}
\left\{ 
15 p_{jN,N}^{2} + 3 \sum_{ \substack{ k=1 \\ k \neq N} }^{n} 
(p_{jN,k}^{2} + p_{jk,N}^{2} + p_{jk,k}^{2}) 
+ \sum_{ \substack{ k=1 \\ k \neq N} }^{n} 
\sum_{ \substack{ \ell=1 \\ \ell \neq N, \ell \neq k} }^{n} p_{jk, \ell}^{2}
\right\}.
\end{split}
\end{equation}
By the similar way, we also have 
\begin{equation} \label{eq:5.12}
\begin{split}
K_{j3, 2}^{(N)} 
=\frac{6 |\mathbb{S}^{n-1}|}{n(n+2)(n+4)} \sum_{ \substack{ k=1 \\ k \neq N} }^{n} (p_{jN, N} p_{jk,k} +p_{jN,k} p_{jk,N})
\end{split}
\end{equation}
and 
\begin{equation} \label{eq:5.13}
\begin{split}
K_{j3,3}^{(N)} =\frac{|\mathbb{S}^{n-1}|}{n(n+2)(n+4)} 
\sum_{ \substack{ k, \ell=1 \\ k < \ell \\ k \neq N, \ell \neq N } }^{n} 
 (p_{jk,k} p_{j \ell, \ell} +p_{jk, \ell} p_{j \ell, k}).
\end{split}
\end{equation}
Observing that
\begin{equation*}
\begin{split}
\sum_{ \substack{ k=1 \\ k \neq N} }^{n}  p_{jk,k}^{2}
+ 
\sum_{ \substack{ k, \ell=1 \\ k < \ell \\ k \neq N, \ell \neq N } }^{n} 
p_{jk,k} p_{j\ell, \ell} = \left( \sum_{ \substack{ k=1 \\ k \neq N} }^{n}  p_{jk,k} \right)^{2},
\end{split}
\end{equation*}
we arrive at the expression of $K_{3}$, \eqref{eq:5.8} by \eqref{eq:5.10}-\eqref{eq:5.13}.
We can apply same argument to have \eqref{eq:5.9}.
We complete the proof of Lemma \ref{Lem:5.1}. 
\end{proof}

The terms
$\| e^{-\beta |\xi|^{2} } \hat{\tilde{R}}_{N,c}(t) \|_{2}$ and 
$\| e^{-\beta |\xi|^{2} } \hat{\tilde{R}}_{N,s}(t) \|_{2}$ in \eqref{eq:5.2} are estimated as follows:
Applying \eqref{eq:3.10} with $k=1$,
we see that
\begin{equation} \label{eq:5.14}
\begin{split}
\| e^{-\beta |\xi|^{2} } \hat{\tilde{R}}_{N,c}(t) \|_{2} & \le 
\left\| W^{(\alpha_{2}, \beta)}_{c}(t) \ast f_{0N}-\sum_{  |\gamma| \le 1} m_{f_{0N}, \gamma} 
\partial_{x}^{\gamma} W^{(\alpha, \beta)}_{c} (t) \right\|_{2} \\
& +\sum_{\ell=1}^{2} \sum_{k=1}^{n} 
\left\| \tilde{W}^{(\alpha_{\ell}, \beta)}_{c,(N,k)}(t) \ast f_{0k}
-\sum_{  |\gamma| \le 1} m_{f_{0k}, \gamma}  \tilde{W}^{(\alpha_{\ell}, \beta)}_{c, (N,k)}(t) \right\|_{2} \\
& \le C \beta^{-\frac{n}{4}-\frac{3}{4}} \| f_{0} \|_{L^{1,1}}
+C \beta^{-\frac{n}{4}-\frac{1}{2}} \sum_{k=1}^{n} \int_{|y| \ge \beta^{\frac{1}{4}}} 
|y||f_{0k}(y)|dy +o(1)
\end{split}
\end{equation}
as $t \to \infty$ for $\beta >0$.
Similarly, we have
\begin{equation} \label{eq:5.15}
\begin{split}
& \| e^{-\beta |\xi|^{2} } \hat{\tilde{R}}_{N,s}(t) \|_{2} 
\le C \beta^{-\frac{n}{4}-\frac{1}{4}} \| f_{1} \|_{L^{1,1}}
+C \beta^{-\frac{n}{4}} \sum_{k=1}^{n} \int_{|y| \ge \beta^{\frac{1}{4}}} 
|y||f_{1k}(y)|dy+o(1)
\end{split}
\end{equation}
as $t \to \infty$ for $\beta>0$ by \eqref{eq:3.11} with $k=1$.
\subsection{$n=2$}
In this subsection, we shall show the estimates \eqref{eq:1.18} and \eqref{eq:1.19}
First, we prove the estimate \eqref{eq:1.18}.
Employing the method in \cite{T1}, we easily have 
\begin{equation} \label{eq:5.16}
\begin{split}
\| \hat{U}_{N,s}(t) \|_{2} 
& \le C (|M_{1}|+|m_{1N}| ) \sqrt{\log(t+2)} \le C |M_{1}| \sqrt{\log(t+2)}
\end{split}
\end{equation}
for $t \ge 0$.
Applying the the mean value theorem 
\begin{equation*}
\begin{split}
\hat{f}_{0k} (\xi)-m_{0k}= \xi \cdot \nabla_{\xi} \hat{f}_{0k}(\theta_{k} \xi) 
\end{split}
\end{equation*}
for some $\theta_{k} \in (0,1)$ for $k=1, 2$, 
we see that 
\begin{equation} \label{eq:5.17}
\begin{split}
 \| \hat{R}_{N,s}(t)) \|_{2} \le C \| f_{1} \|_{L^{1,1}}, 
\end{split}
\end{equation}
where we used the fact that 
$\| \nabla \hat{f}_{0k} \|_{\infty} \le C \| f _{0k} \|_{L^{1,1}}$.
Therefore we obtain 
\begin{equation*} 
\begin{split}
\| u_{N}(t) \|_{2} & \le 
\| \hat{U}_{N,s}(t) \|_{2} + \| \hat{R}_{N,s}(t)) \|_{2}
+\|  \hat{U}_{N,c}(t)+\hat{R}_{N,c}(t) \|_{2} \\
& \le  C |M_{1}| \sqrt{\log(t+2)} + C \| f_{1} \|_{L^{1,1}} +C \| f_{0} \|_{2},
\end{split}
\end{equation*}
which is the desired estimate \eqref{eq:1.18}.
Next we consider the lower bound estimate \eqref{eq:1.19}.
It follows from \eqref{eq:5.3}, \eqref{eq:5.4}, \eqref{eq:5.6}, \eqref{eq:5.7}, \eqref{eq:5.9} and \eqref{eq:3.3} that
\begin{equation*} 
\begin{split}
\| e^{-\beta |\xi|^{2} } \hat{\tilde{U}}_{N,c}(t) \|_{2}^{2} 
& = \ 2^{-3} \beta^{-2}  (K_{01}^{(N)}-2K_{02}^{(N)} +2 K_{03}^{(N)})+o(1) \\
& \ge C \beta^{-2} (p_{01,1}^{2} +p_{01,2}^{2} +p_{02,1}^{2}+p_{02,2}^{2})
\end{split}
\end{equation*}
and 
\begin{equation*} 
\begin{split}
\| e^{-\beta |\xi|^{2} } \hat{\tilde{U}}_{N,s}(t) \|_{2}^{2} & = 2^{-2} \beta^{-1}
\left\{  
\frac{1}{2 \alpha_{2}^{2}}
(K_{11}^{(N)}-2K_{12}^{(N)} +K_{13}^{(N)})+
\frac{1}{2 \alpha_{1}^{2}} K_{13}^{(N)}
\right\}
+
o(1) \\
& \ge C\beta^{-1}
\left\{  
\frac{1}{\alpha_{2}^{2}}
(p_{11,2}^{2}+p_{12,1}^{2})+
\frac{1}{\alpha_{1}^{2}}(p_{11,1}^{2}+p_{12,2}^{2}) 
\right\}
+
o(1)
\end{split}
\end{equation*}
as $t \to \infty$.
Namely we obtain 
\begin{equation} \label{eq:5.18}
\begin{split}
\| e^{-\beta |\xi|^{2} } \hat{\tilde{U}}_{N,c}(t) \|_{2}
& \ge C_{1} \beta^{-1} (|P_{01}|+|P_{02}|) 
\end{split}
\end{equation}
and 
\begin{equation} \label{eq:5.19}
\begin{split}
\| e^{-\beta |\xi|^{2} } \hat{\tilde{U}}_{N,s}(t) \|_{2} & \ge C_{2} \beta^{-\frac{1}{2}}
(|P_{11}|+|P_{12}|)
\end{split}
\end{equation}
for some $C_{1}>0$ and $C_{2}>0$,
when $t \gg 1$.
We choose $\beta_{2}>0$ sufficiently large such that 
for all $\beta \ge \beta_{2}$, it holds that 
\begin{equation} \label{eq:5.20}
\begin{split}
& \| e^{-\beta |\xi|^{2} } \hat{\tilde{R}}_{N,c}(t) \|_{2} \le \frac{C_{1}}{2} \beta^{-1} (|P_{01}|+|P_{02}|) 
\end{split}
\end{equation}
by \eqref{eq:5.14} with $n=2$ and 
\begin{equation} \label{eq:5.21}
\begin{split}
& \| e^{-\beta |\xi|^{2} } \hat{\tilde{R}}_{N,s}(t) \|_{2} \le \frac{C_{2}}{2} \beta^{-\frac{1}{2}}
(|P_{11}|+|P_{12}|)
\end{split}
\end{equation}
by \eqref{eq:5.15} with $n=2$.
Therefore,
Using \eqref{eq:4.7}, \eqref{eq:4.12}, \eqref{eq:5.2} and \eqref{eq:5.18}-\eqref{eq:5.21}, 
we have 
\begin{equation*} 
\begin{split}
\| u_{N}(t) \|_{2} 
& \ge 
C |M_{0}| \beta^{-\frac{1}{2}} + C|M_{1}| \sqrt{\log(t+2)} \\
& +\frac{C_{1}}{2} \beta^{-1} (|P_{01}|+|P_{02}|) 
+\frac{C_{2}}{2} \beta^{-\frac{1}{2}}
(|P_{11}|+|P_{12}|)+o(1)
\end{split}
\end{equation*}
as $t \to \infty$, 
which is the desired estimate \eqref{eq:1.19}.
\subsection{$n=3$}
In this subsection, we shall give the proof of estimate \eqref{eq:1.20}.
Applying \eqref{eq:5.1}-\eqref{eq:5.3} with the estimates
\begin{equation*} 
\begin{split}
2(n-2)  p_{0N,N} \sum_{ \substack{ k=1 \\ k \neq N} }^{n} p_{0k,k} 
\le (n-2)^{2} p_{0N,N}^{2} 
+ \left( \sum_{ \substack{ k=1 \\ k \neq N} }^{n} p_{0k,k}  \right)^{2}
\end{split}
\end{equation*}
and 
\begin{equation*} 
\begin{split}
2(n-2) p_{0N,k} p_{0k, N} \le (n-2)^{2} p_{0N,k}^{2} + p_{0k, N}^{2},
\end{split}
\end{equation*}
we see that
\begin{equation} \label{eq:5.22}
\begin{split}
& K_{01}^{(N)}-2K_{02}^{(N)} +2 K_{03}^{(N)} \\
& \ge C
\left\{ 
(4n+10) p_{0N,N}^{2} +\left( 
\sum_{ \substack{ k=1 \\ k \neq N} }^{n} p_{0k,k} 
\right)^{2} 
+
(8n+2) \sum_{ \substack{ k=1 \\ k \neq N} }^{n} p_{0N,k}^{2}
+5 \sum_{ \substack{ k=1 \\ k \neq N} }^{n} p_{0k,N}^{2}
\right. \\
& +
\left. 4
\sum_{ \substack{ k=1 \\ k \neq N} }^{n}  p_{0k,k}^{2} 
+ \sum_{ \substack{ k, \ell =1 \\ k< \ell \\
k \neq N, \ell \neq N} }^{n}  (p_{0k, \ell} + p_{0 \ell , k} )^{2} 
\right\}.
\end{split}
\end{equation}
Then we apply \eqref{eq:5.3} and \eqref{eq:5.22} to obtain
\begin{equation} \label{eq:5.23}
\begin{split}
& \| e^{-\beta |\xi|^{2} } \hat{\tilde{U}}_{N,c}(t) \|_{2} \ge C \beta^{-\frac{n}{4}-\frac{1}{2}} |\mathbb{P}_{0}| 
\end{split}
\end{equation}
for some $C_{3}>0$.
Similar argument apply to the case $\| e^{-\beta |\xi|^{2} } \hat{\tilde{U}}_{N,c}(t) \|_{2}$. 
Namely, noting that
\begin{equation*}
\begin{split}
& K_{11}^{(N)}-2K_{12}^{(N)} +K_{13}^{(N)} \\
& =C \left\{ 
(n^{2}-1) \left( 
p_{1N,N}-\frac{1}{n-1} \sum_{ \substack{k=1 \\ k \neq N} }^{n} p_{1k,k}
\right)^{2} \right. \\
& +\sum_{ \substack{k=1 \\ k \neq N} }^{n} 
\left( \frac{2(n-3)}{n-1}  p_{1k,k}^{2}
+ (2n+1) p_{1N,k}^{2}
+ 2 
p_{1k,N}^{2} 
\right) \\
&  
\left. 
+ \sum_{ \substack{ k=1 \\ k \neq N} }^{n}  ( (n+1) p_{1N,k} +p_{1k, N} )^{2}
+\sum_{ \substack{ k, \ell=1 \\ k < \ell \\ k \neq N, \ell \neq N } }^{n} 
\left(
(p_{1k, \ell} + p_{1 \ell, k} )^{2} 
+ \frac{2}{n-1} 
(p_{1k,k}-p_{1 \ell,\ell})^{2}
\right)
\right\} 
\end{split}
\end{equation*}
and 
\begin{equation*}
\begin{split}
K_{13}^{(N)} & =C
\left\{ 
15 p_{1N,N}^{2} + 3 \sum_{ \substack{k=1 \\ k \neq N} }^{n} (p_{1N,k}
+p_{1k,N}^{2})^{2} 
+
2 \sum_{ \substack{k=1 \\ k \neq N} }^{n} p_{1k,k}^{2} 
+ \left( 
\sum_{ \substack{ k=1 \\ k \neq N} }^{n} p_{1k,k} 
\right)^{2} 
\right. \\
& + 
\left. 
6  p_{jN,N}\sum_{ \substack{ k=1 \\ k \neq N} }^{n}  p_{1k,k} 
+\sum_{ \substack{ k, \ell=1 \\ k < \ell \\ k \neq N, \ell \neq N } }^{n} 
(p_{1k, \ell} + p_{1 \ell, k} )^{2} 
\right\} \\
& \ge
C 
\left\{ 
6 p_{1N,N}^{2} + 3 \sum_{ \substack{k=1 \\ k \neq N} }^{n} (p_{1N,k}
+p_{1k,N}^{2})^{2} 
+
2 \sum_{ \substack{k=1 \\ k \neq N} }^{n} p_{1k,k}^{2} 
+\sum_{ \substack{ k, \ell=1 \\ k < \ell \\ k \neq N, \ell \neq N } }^{n} 
(p_{1k, \ell} + p_{1 \ell, k} )^{2} 
\right\}, 
\end{split}
\end{equation*}
we have
\begin{equation} \label{eq:5.24}
\begin{split}
& \| e^{-\beta |\xi|^{2} } \hat{\tilde{U}}_{N,s}(t) \|_{2} \ge C_{4} \beta^{-\frac{n}{4}}
|\mathbb{P}_{1}| 
\end{split}
\end{equation}
for some $C_{4}>0$ by \eqref{eq:5.4} and \eqref{eq:5.3}.
Again, thanks to the estimates \eqref{eq:5.14} and \eqref{eq:5.15}, 
we choose $\beta_{3}>0$ such that for all $\beta \ge \beta_{3}$, 
the following estimates hold:   
\begin{equation} \label{eq:5.25}
\begin{split}
& \| e^{-\beta |\xi|^{2} } \hat{\tilde{R}}_{N,c}(t) \|_{2} \le \frac{C_{3}}{2} \beta^{-\frac{n}{4}-\frac{1}{2}}
|\mathbb{P}_{0}| 
\end{split}
\end{equation}
and
\begin{equation} \label{eq:5.26}
\begin{split}
& \| e^{-\beta |\xi|^{2} } \hat{\tilde{R}}_{N,s}(t) \|_{2} \le \frac{C_{4}}{2} \beta^{-\frac{n}{4}}
|\mathbb{P}_{1}|.
\end{split}
\end{equation}
Summing up \eqref{eq:4.12}, \eqref{eq:4.13}, \eqref{eq:5.2} and \eqref{eq:5.23}-\eqref{eq:5.26}, 
we have 
\begin{equation*} 
\begin{split}
\| u_{N}(t) \|_{2} 
& \ge 
C \beta^{-\frac{n}{4}}|M_{0}|+ C\beta^{-\frac{n-2}{4}}|M_{1}| 
+\frac{C_{3}}{2} \beta^{-\frac{n}{4}-\frac{1}{2}}
|\mathbb{P}_{0}|
+\frac{C_{4}}{2} \beta^{-\frac{n}{4}}
|\mathbb{P}_{1}|+o(1)
\end{split}
\end{equation*}
as $t \to \infty$, 
which is the desired estimate \eqref{eq:1.20}.
We complete the proof of Theorem \ref{thm:1.4}.
%
\section{Appendix: wave equation}
In this appendix, we mention the result for the Cauchy problem of the wave equation \eqref{eq:1.3}. 
For the simplicity, we introduce the notation.
For the initial data $w_{0}$ and $w_{1}$, we define    
\begin{equation*}
\begin{split}
m_{j} := \int_{\mathbb{R}^{n}} w_{j}(x) dx, \quad p_{j,k} := -\int_{\mathbb{R}^{n}} x_{k} w_{j}(x) dx
\end{split}
\end{equation*} 
and
\begin{equation*}
\begin{split}
P_{j} := \mathstrut^{T}(p_{j,1}, p_{j,2}, \cdots,  p_{j,n})
\end{split}
\end{equation*} 
for $j=0,1$ and $k=1,2, \cdots, n$.
The lower bound estimates of the solution to \eqref{eq:1.3} is formulated as follows:
\begin{thm} \label{thm:6.1}
{\rm (i)}\ Suppose that $(u_{0}, u_{1}) \in H^{1} \cap L^{1} \times L^{2} \cap L^{1}$.
If either $m_{0} \neq 0$ or $m_{1} \neq 0$, 
the following estimate holds:
\begin{equation*} 
\begin{split}
\| u(t) \|_{L^{2}} 
\ge 
\begin{cases}
&  C (|m_{0}| + |m_{1}|t^{\frac{1}{2}} ), \quad n=1, \\
& C (|m_{0}|  + |m_{1}| \sqrt{\log(t+2)} ), \quad n=2, \\
&  C (|m_{0}| + |m_{1}|), \quad n \ge 3
\end{cases}
\end{split}
\end{equation*}
for $t \gg 1$.\\
{\rm (ii)}\ 
Suppose that $(u_{0}, u_{1}) \in H^{1} \cap L^{1,1} \times L^{2} \cap L^{1,1}$.
If either $P_{0} \neq 0$ or $P_{1} \neq 0$, 
the following estimate holds:
\begin{equation*} 
\begin{split}
\| u(t) \|_{L^{2}} 
\ge 
\begin{cases}
&  C (|m_{0}| +|P_{0}|+ |m_{1}|t^{\frac{1}{2}} +|P_{1}|), \quad n=1, \\
&  C (|m_{0}| + |P_{0}|+ |m_{1}| \sqrt{\log(t+2)} +|P_{1}|), \quad n=2, \\
&  C (|m_{0}| + |P_{0}| +|m_{1}| +|P_{1}|), \quad n \ge 3
\end{cases}
\end{split}
\end{equation*}
for $t \gg 1$.
\end{thm}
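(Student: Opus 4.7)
The plan is to adapt the proofs of Theorems \ref{thm:1.1} and \ref{thm:1.4} to the scalar wave equation, where the absence of the projection decomposition simplifies matters. By Fourier transform the solution reads
\[
\hat{w}(t,\xi) = \cos(t\alpha|\xi|)\hat{w}_{0}(\xi) + \frac{\sin(t\alpha|\xi|)}{\alpha|\xi|}\hat{w}_{1}(\xi).
\]
For part (i) I would split $\hat{w}(t,\xi)=\hat{U}_{c}(t,\xi)+\hat{U}_{s}(t,\xi)+\hat{R}_{c}(t,\xi)+\hat{R}_{s}(t,\xi)$ with principal terms $\hat{U}_{c}=m_{0}\cos(t\alpha|\xi|)$ and $\hat{U}_{s}=m_{1}\sin(t\alpha|\xi|)/(\alpha|\xi|)$ coming from the zeroth-order Taylor expansions $\hat{w}_{j}(\xi)=m_{j}+(\hat{w}_{j}(\xi)-m_{j})$. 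Plancherel with a Gaussian cutoff gives $\|w(t)\|_{L^{2}}\ge\|e^{-\beta|\xi|^{2}}\hat{w}(t,\cdot)\|_{L^{2}}$ for any $\beta>0$, and Lemma \ref{Lem:2.3} makes the mixed cross term between $\hat{U}_{c}$ and $\hat{U}_{s}$ an $o(1)$ as $t\to\infty$.

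Lemma \ref{Lem:3.1} with $k=0$ then yields $\|e^{-\beta|\xi|^{2}}\hat{U}_{c}\|_{L^{2}}\ge c|m_{0}|\beta^{-n/4}$, and for $n\ge 3$ also $\|e^{-\beta|\xi|^{2}}\hat{U}_{s}\|_{L^{2}}\ge c|m_{1}|\beta^{-(n-2)/4}$; for $n=1,2$ the growing lower bounds on $\|e^{-\beta|\xi|^{2}}\hat{U}_{s}\|_{L^{2}}$ of order $e^{-\beta}\sqrt{t}$, resp.\ $e^{-\beta}\sqrt{\log(t+2)}$, come from Lemma \ref{Lem:3.2}. Proposition \ref{prop:3.3}(i) with $k=0$ controls the remainders $\hat{R}_{c},\hat{R}_{s}$ when $n\ge 3$ by expressions of the form $C\bigl(\beta^{-(n-1)/4-1/2}\|w_{j}\|_{L^{1}}+\beta^{-(n-2)/4}\int_{|y|\ge\beta^{1/4}}|w_{j}(y)|\,dy\bigr)+o(1)$, and Proposition \ref{prop:3.3}(ii) supplies the corresponding $o(t^{1/2})$ and $o(\sqrt{\log(t+2)})$ estimates in dimensions $1$ and $2$. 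Since $w_{j}\in L^{1}$, the tail $\int_{|y|\ge\beta^{1/4}}|w_{j}(y)|\,dy$ vanishes as $\beta\to\infty$ (as in \eqref{eq:4.13}), so taking $\beta$ sufficiently large dominates every remainder by its intended leading term and delivers (i).

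For part (ii), I would add one more order of Taylor expansion, writing $\hat{w}_{j}(\xi)=m_{j}+i\xi\cdot P_{j}+\tilde r_{j}(\xi)$, and introducing additional principal pieces $\hat{\tilde U}_{c}=(i\xi\cdot P_{0})\cos(t\alpha|\xi|)$ and $\hat{\tilde U}_{s}=(i\xi\cdot P_{1})\sin(t\alpha|\xi|)/(\alpha|\xi|)$. Since $\hat{U}_{\bullet}$ is real-valued and $\hat{\tilde U}_{\bullet}$ is purely imaginary, their mixed cross products vanish identically, while the remaining cross products involving different time frequencies are $o(1)$ by Lemma \ref{Lem:2.3}. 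A direct computation parallel to \eqref{eq:5.3}--\eqref{eq:5.4}, but without the $\xi_{N}\xi_{k}/|\xi|^{2}$ factor, gives $\|e^{-\beta|\xi|^{2}}\hat{\tilde U}_{c}\|_{L^{2}}\ge c|P_{0}|\beta^{-n/4-1/2}$ and $\|e^{-\beta|\xi|^{2}}\hat{\tilde U}_{s}\|_{L^{2}}\ge c|P_{1}|\beta^{-n/4}$, with the growing factors in $n=1,2$ again inherited from Lemma \ref{Lem:3.2}. Proposition \ref{prop:3.3} with $k=1$ controls the new remainders, and $w_{j}\in L^{1,1}$ makes the $L^{1,1}$ tail terms vanish as $\beta\to\infty$ (as in \eqref{eq:5.1}).

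The hard part is the bookkeeping: each of $|m_{0}|,|m_{1}|,|P_{0}|,|P_{1}|$ comes with its own $\beta$-scaling, and the lower bound must hold simultaneously even when some of these vanish, so one must either pick a single $\beta\gg 1$ that dominates all remainders at once or split into subcases according to which moment is nonzero, in the spirit of the argument around \eqref{eq:4.14}--\eqref{eq:4.15}. The most delicate single step is the derivation of the growing lower bounds for $n=1,2$: these rely on the oscillatory-integral computations underlying Lemmas \ref{Lem:3.1}--\ref{Lem:3.2} and on the approximation estimates \eqref{eq:3.12}--\eqref{eq:3.14} of Proposition \ref{prop:3.3}(ii), which constitute the technical core of the argument.
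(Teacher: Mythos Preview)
Your proposal is correct and follows essentially the same approach the paper itself indicates: the paper states that Theorem~\ref{thm:6.1} is proved via Lemmas~\ref{Lem:2.3}, \ref{Lem:3.1}, \ref{Lem:3.2} and Proposition~\ref{prop:3.3} in the same manner as Theorems~\ref{thm:1.1} and~\ref{thm:1.4}, and your outline carries out precisely this strategy in the simpler scalar setting, including the Gaussian cutoff, the cross-term elimination by Riemann--Lebesgue, the remainder absorption by choice of large $\beta$, and the subcase bookkeeping. Your observation that $\hat{U}_{\bullet}$ is real while $\hat{\tilde U}_{\bullet}$ is purely imaginary, making their cross products vanish identically, is a clean simplification over the elastic case that the paper does not spell out.
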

Theorem \ref{thm:6.1} is shown by the Theorem \ref{thm:1.1} by Lemmas \ref{Lem:2.3},
\ref{Lem:3.1} and \ref{Lem:3.2} and Proposition \ref{prop:3.3}.
We omit the detail. 

Combining the estimates derived in \cite{I2} and \cite{CheI1},
we obtain sharp $L^{2}$ estimates to the solution of \eqref{eq:1.3} for 
$(u_{0}, u_{1}) \in H^{1} \cap L^{1} \times L^{2} \cap L^{1}$.
\begin{cor}
Suppose that $(u_{0}, u_{1}) \in H^{1} \cap L^{1} \times L^{2} \cap L^{1}$.
If either $m_{0} \neq 0$ or $m_{1} \neq 0$, 
the following estimates hold:
\begin{equation*} 
\begin{split}
C (|m_{0}| +  |m_{1}|t^{\frac{1}{2}} )\le \| u(t) \|_{L^{2}} \le C (\| w_{0} \|_{L^{2}} 
+ t^{\frac{1}{2}} \| w_{1} \|_{L^{1}} + \| w_{1} \|_{L^{2}}  ), \quad n=1,
\end{split}
\end{equation*}
\begin{equation*} 
\begin{split}
C (|m_{0}| +|m_{1}|\sqrt{\log(t+2)} ) \le \| u(t) \|_{L^{2}} \le C (\| w_{0} \|_{L^{2}} 
+ \sqrt{\log(t+2)} \| w_{1} \|_{L^{1}} + \| w_{1} \|_{L^{2}}  ), \quad n=2,
\end{split}
\end{equation*}
and
\begin{equation*} 
\begin{split}
C (|m_{0}| + |m_{1}|) \le \| u(t) \|_{L^{2}} \le C (\| w_{0} \|_{L^{2}} +\| w_{1} \|_{L^{1}} + \| w_{1} \|_{L^{2}}  ), \quad n \ge 3
\end{split}
\end{equation*}
for $t \gg 1$.
\end{cor}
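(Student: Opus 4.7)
My approach is to treat the lower and upper bounds as two essentially independent tasks, each of which reduces quickly to material already in the paper.

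\textbf{Lower bound.} This part is immediate: Theorem~\ref{thm:6.1}(i), applied under the hypothesis $|m_{0}| + |m_{1}| > 0$, already delivers exactly the stated lower bounds in each of the three dimension regimes $n=1$, $n=2$, $n\ge 3$. No further argument is needed; the Corollary is simply rephrasing that theorem together with the matching upper bound.

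\textbf{Upper bound.} I would start from the explicit Fourier representation
\begin{equation*}
\widehat{w}(t,\xi) = \cos(t\alpha|\xi|) \widehat{w}_{0}(\xi) + \frac{\sin(t\alpha|\xi|)}{\alpha|\xi|} \widehat{w}_{1}(\xi).
\end{equation*}
Plancherel together with $|\cos(t\alpha|\xi|)| \le 1$ controls the $\widehat{w}_{0}$ contribution by $\|w_{0}\|_{L^{2}}$, with no $t$-dependence. For the $\widehat{w}_{1}$ contribution I would split the $\xi$-integral into $\{|\xi| \le 1\}$ and $\{|\xi|>1\}$. On $\{|\xi|>1\}$ the symbol is bounded and Plancherel yields a contribution $\le C\|w_{1}\|_{L^{2}}$, free of $t$. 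On $\{|\xi|\le 1\}$ I would pull out $\|\widehat{w}_{1}\|_{L^{\infty}} \le \|w_{1}\|_{L^{1}}$, reducing matters to the scalar integral
\begin{equation*}
\int_{|\xi|\le 1} \frac{\sin^{2}(t\alpha|\xi|)}{\alpha^{2}|\xi|^{2}}\, d\xi,
\end{equation*}
which, by passing to polar coordinates and exactly the argument used in Lemma~\ref{Lem:3.2} (bounding the integrand by $t^{2}$ on $|\xi|\le t^{-1/n}$ and by $|\xi|^{-2}$ on the complement), is $O(t)$ for $n=1$, $O(\log(t+2))$ for $n=2$, and $O(1)$ for $n\ge 3$. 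Taking square roots and assembling the three contributions produces the desired upper bounds.

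\textbf{Main obstacle.} There is essentially no substantial difficulty; the statement is a packaging of Theorem~\ref{thm:6.1}(i) together with a standard Plancherel argument whose moving parts already appear in Section~3. The one point requiring a bit of care is to carry out the low/high frequency split so that the growth factors $t^{1/2}$ and $\sqrt{\log(t+2)}$ attach only to $\|w_{1}\|_{L^{1}}$ and not to either $L^{2}$ norm; this is what distinguishes the bound from the coarser estimates \eqref{eq:1.5} and \eqref{eq:1.7}, and it is guaranteed precisely by applying Plancherel (rather than the $L^{1}\to L^{\infty}$ Fourier bound) on the high-frequency piece.
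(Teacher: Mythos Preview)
Your proposal is correct. For the lower bound you do exactly what the paper does: invoke Theorem~\ref{thm:6.1}(i). For the upper bound the paper does not argue at all but simply cites \cite{I2} and \cite{CheI1}; your direct Plancherel argument with a low/high frequency split is the natural self-contained substitute, and it is essentially how the paper handles the analogous elastic estimate~\eqref{eq:1.13} (``we easily prove the upper bound estimate \eqref{eq:1.13} and we omit the detail''). So the two routes coincide in spirit, with yours being more explicit.

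One small correction to your parenthetical: if you bound the integrand by $t^{2}$ on $\{|\xi|\le t^{-1/n}\}$, then for $n=2$ the inner contribution is $t^{2}\cdot \pi t^{-1}=\pi t$, not $O(\log t)$. The crude bound works only if you split at $|\xi|\sim t^{-1}$ (inner part $\le t^{2}\cdot C t^{-n}=Ct^{2-n}$, outer part $\int_{t^{-1}}^{1} r^{n-3}\,dr$), which gives the stated rates in all three regimes. The paper's own display~\eqref{eq:3.6} has the same cosmetic issue with the $t^{-1/n}$ threshold, so you are in good company; just adjust the split point and your sketch goes through verbatim.
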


\vspace*{5mm}
\noindent
\textbf{Acknowledgments. }
\smallskip
H. Takeda was partially supported by
JSPS KAKENHI Grant Numbers JP24K06822.

\end{document}